\documentclass[12pt]{amsart}
\usepackage{amscd,amssymb,verbatim}
\overfullrule = 0pt

\usepackage{amssymb,amsmath,amsthm,epsfig, mathrsfs}
\usepackage{amsfonts,amsbsy,upref}
\usepackage[mathscr]{eucal}

\oddsidemargin=0.0in
\evensidemargin=0.0in
\setlength{\topmargin}{-0.2in} \setlength{\oddsidemargin}{-0.1in}
\setlength{\textheight}{21cm}
\setlength{\textwidth}{16.1cm}

\def\cnum#1{\bigcirc\kern -8pt#1}

\theoremstyle{plain}
\newtheorem{thm}{Theorem}[section]
\newtheorem{pro}[thm]{Proposition}
\newtheorem{lem}[thm]{Lemma}
\newtheorem{cor}[thm]{Corollary}

\newtheorem{definition}{Definition}[section]

\newtheorem{example}[thm]{Example}
\newtheorem*{problems}{Problems}

\theoremstyle{definition}

\newtheorem{rem}{Remark}

\newcommand{\co}{\rm{co}}

\begin{document}

\title{On the approximate fixed point property in abstract spaces}

\author{C. S. Barroso}
\address{Departamento de Matem\'atica,
Universidade Federal do Cear\'a, Bl 914, 60455-760, Campus do Pici, Fortaleza-CE, Brazil}
\email{cleonbar@mat.ufc.br}

\thanks{C. S. Barroso's research has been partially supported by the Brazilian-CNPq Grant.}

\author{O. F. K. Kalenda}
\address{Department of Mathematical Analysis, Faculty of Mathematics and Physics, Charles University, Sokolovsk\'a 83, 186 75, Praha 8, Czech Republic}
\email{kalenda@karlin.mff.cuni.cz}
\thanks{O. F. K. Kalenda supported in part by the grant GAAV IAA 100190901  and in part by the Research Project MSM~0021620839 from the Czech Ministry of Education.}

\author{P.-K. Lin}
\address{Department of Mathematics, University of Memphis, Memphis, TN 38152, USA}
\email{pklin@memphis.edu}

\thanks{*A very preliminary version of this work was presented at the 4th ENAMA (National Meeting on Mathematical Analysis and Applications) in Bel\'em-PA, Brazil.}

 \markboth{C. S. Barroso, O. F. K. Kalenda \and P.-K. Lin}{Approximate fixed point property}

\date{\today}


\begin{abstract} Let $X$ be a Hausdorff topological vector space, $X^*$ its topological dual and $Z$ a subset of $X^*$. In this paper, we establish some results concerning the $\sigma(X,Z)$-approximate fixed point property for bounded, closed convex subsets $C$ of $X$. Three major situations are studied. First when $Z$ is separable in the strong topology. Second when $X$ is a metrizable locally convex space and $Z=X^*$, and third when $X$ is not necessarily metrizable but admits a metrizable locally convex topology compatible with the duality. Our approach focuses on establishing the Fr\'echet-Urysohn property for certain sets with regarding the $\sigma(X,Z)$-topology. The support tools include the Brouwer's fixed point theorem and an analogous  version of the classical Rosenthal's $\ell_1$-theorem for $\ell_1$-sequences in metrizable case. The results are novel and generalize previous work obtained by the authors in Banach spaces.

\medskip
\noindent \textbf{Keywords:} Weak approximate fixed point property, metrizable locally convex space, $\ell_1$ sequence, Fr\'echet-Urysohn space.

\medskip
\noindent \textbf{2000 MSC} \textit{Primary}: 47H10, 46A03.
\end{abstract}

\maketitle


\section{Introduction}
\label{sec:Int}
Let $X$ be a Hausdorff topological vector space (abbreviated HTVS) and $C$ a bounded, closed convex subset of $X$. In this paper we address the problem whether every continuous mapping $f\colon C\to C$ has in some sense an approximate fixed point sequence, that is, a sequence $(x_n)_n$ such that $(x_n - f(x_n))_n$ converges to zero as $n$ goes to infinity. This problem has been investigated by several authors, and very often there have been a number of works using results along this line to address various problems arising in several branches of mathematics. We would like to address the reader to \cite{Branzei1, Branzei} and references therein where metric approximate fixed point results are used for solving important problems in game theory, including approximate Nash equilibrium in games. The study of this problem in the general framework of HTVS is closely related to Almost-Fixed Point Theory which was apparently started by Walt \cite{Walt}, van de Vel \cite{Vel}, Hazewinkel and van de Vel \cite{Hazewinkel-Vel}, and Idzik \cite{Idzik1,Idzik2}. Nevertheless, our greatest motivation comes from some studies done in the setting of normed and Banach spaces, which is the subject matter of the present paper and brings out the necessity of considering weaker topologies ensuring the sequential approximation of fixed points where no stronger convergence can be expected. For example, when $X$ is a normed space with norm $\|\cdot\|$, Lin-Sternfeld \cite{Lin-Sternfeld} proved that if $C$ is not totally bounded then there exists a Lipschitz mapping $f$ from $C$ into itself such that $\inf_C\|x-f(x)\|>0$. In particular, if $X$ is Banach and $C$ is noncompact then such a mapping can always be constructed. It is natural then to look for weak-approximating fixed point sequences instead strong one. When $X$ is a Banach space, a nonexistence result was reported by Dom\'\i nguez Benavides, Jap\'on Pineda and Prus in \cite{BPP}. They proved that every closed convex  subset of a Banach space which is not weakly compact contains a closed convex subset $K$ which fails certain kind of approximation of fixed points for continuous affine self-mappings (more precisely, there is a continuous affine map $T:K\to K$ such that $\inf\{\liminf \|y-T^n x\|:x,y\in K\}>0$). In \cite[Lemma 1]{Moloney-Weng} Moloney and Weng proved that if $X$ is a Hilbert space and $C$ is a closed ball, then every demicontinuous mapping $f\colon C\to C$ admits a weak approximated fixed point sequence, that is, a sequence $\{x\sb n\}\subset C$ such that $(x\sb n - f(x\sb n))\sb n$ converges weakly to zero. This result was used in the proof of a fixed point theorem for demicontinuous pseudocontractions self-mapping closed, bounded, convex subsets of Hilbert spaces. Motivated by these results, Barroso \cite{Barroso} started the study of the weak-approximate fixed point property (weak-AFPP, in short) in Banach and abstract spaces. Let $X^*$ denote the topological dual of $X$.

\begin{definition} Given a subspace $Z$ of $X^*$, we say that $C$ has the $\sigma(X,Z)$-approximate fixed point property (AFPP, in short) if for every continuous mapping $f$ from $C$ into itself, there exists a sequence $(x\sb n)_n$ in $C$ so that $(x^*(x\sb n - f(x\sb n)))_n$ converges to zero for all $x^*\in Z$. Similarly, we say that $X$ has the $\sigma(X,Z)$-AFPP if every bounded, closed convex subset of $X$ has the property.
\end{definition}

\begin{rem}
When $Z=X^*$ we simply write weak-AFPP instead writing $\sigma(X,X^*)$-AFPP. In a similar way, we can also define the $\sigma(X^*,Z)$-AFPP for some subset $Z$ of $X$.
\end{rem}

The main result in \cite[Theorem 3.1]{Barroso} concerning this topic in Banach spaces can be stated as: Every weakly compact convex subset of a Banach space has the weak-AFPP for norm-continuous maps. We should mention at this point that the weak-AFPP features a close relation with some geometric aspects of Banach spaces. For instance, by \cite{Lin-Sternfeld} we can conclude that $X$ will not have the weak-AFPP if it contains an isomorphic copy of $\ell\sb 1$. In fact, by Rosenthal's $\ell\sb 1$-theorem every Schur space fails to have this property. In contrast, subsequent to \cite{Barroso}, it was proved by Barroso and Lin \cite[Theorem 2.2]{Barroso-Lin} that every Asplund space has the weak-AFPP. Even though the class of Asplund spaces encompass a huge variety of spaces including reflexive spaces \cite{Troyanski}, spaces with separable dual \cite{Asplund}, $C(K)$-spaces with $K$ scattered and many other (see e.g. Namioka-Phelps \cite{Namioka-Phelps} for more details on this subject), in \cite{Barroso-Lin} it was posed the question of whether a Banach space not containing $\ell\sb 1$ isomorphically should have the weak-AFPP. Very recently, Kalenda \cite{Kalenda} solved this question using a powerful theorem of Bourgain, Fremlin and Talagrand (\cite[Theorem 3F]{BFT}), so that a complete characterization of weak-AFPP is obtained: a Banach space has the weak-AFPP if and only if it contains no isomorphic copy of $\ell\sb 1$. This provides a quite interesting result that contrasts with many previous studies concerning the fixed point property (FPP) where, at least for nonexpansive maps, no characterization of FPP or weak-FPP seems to be known (cf. \cite{Benavides} and references therein).

The aim of this paper is to continue the study begun in \cite{Barroso, Barroso-Lin} which naturally leads to a number of related questions which are interesting on their own. We hope that the present study will shed some light on the issue of whether one can get a characterization of weak-AFPP for HTVS like that for Banach spaces. To put things into some perspective, we should mention that all our results here are either new, more general or sharper than similar ones. Next we give an outline of the paper: It is divided into three sections. In Section 2, we initially set up the basic framework four our results throughout the paper. Next we establish our first main result (Theorem \ref{prop:1sec2}). It generalizes the main result of \cite{Barroso-Lin} for HTVS with separable strong dual, sets not necessarily closed and not necessarily continuous maps. As a byproduct, we get an easy proof of a fixed point result due to Ky Fan (see Corollary \ref{cor:1sec2}). In Proposition \ref{prop:sepdual}, we indicate some circumstances where Theorem \ref{prop:1sec2} can be applied. Moreover, two illustrative examples concerning the $\sigma(X^*,X)$-AFPP are included. In Section 3, we study the weak-AFPP for the case when $X$ is a metrizable locally convex space (LCS, in short). The idea is to adapt in an accurate way the ingredients used in \cite{Kalenda} to this context. Firstly, we introduce the notion of $\ell_1$-sequences in topological vector spaces
 which reduces to that of isomorphic copies of $\ell_1$ in case of Banach spaces (see Definition \ref{def:1sec3}). After, we shall use a characterization of $\ell_1$-sequences (see Proposition \ref{prop:2sec3}) to show that the heredity of the weak-AFPP for subsets of $C$ it is equivalent to $C$ do not contain $\ell_1$-sequences (see Theorem \ref{thm:FSWAFPP}). The proof of this fact relies essentially on two fundamental results in metrizable LCS. The first one is a slight generalization of the classical Rosenthal's $\ell_1$-theorem to the setting of metrizable LCS (see Theorem \ref{thm:1sec3}). The second one is the Fr\'echet-Urysohn property of the space $(\overline{C-C},w)$, where $w$ denotes the weak-topology of $X$, (see Lemma \ref{lem:FSWAFPP}). This was previously established  by Kalenda \cite{Kalenda} in the Banach spaces setting. As a direct consequence, the weak-AFPP is proved for metrizable LCS without $\ell_1$-sequences (see Corollary \ref{cor:1sec3}). In the final section, we establish the weak-AFPP for certain non-metrizable LCS.
 Firstly, we give an example (see Example \ref{ex:1sec4}) to illustrate that the assumption of metrizability cannot be dropped in the statement of Theorem \ref{thm:1sec3}. We then prove a result in the spirit of Theorem \ref{thm:FSWAFPP} by assuming the existence of metrizable locally convex topologies on Hausdorff LCS compatible with the duality (see Theorem \ref{thm:nonmetrizable}). As a byproduct, we obtain another characterization of the Fr\'echet-Urysohn property in LCS (see Proposition \ref{prop:FU}). With this result in hands, we are able to get an improvement of Theorem 2.4 of \cite{Kalenda}. We then finish the paper with some remarks and some new questions concerning the weak-AFPP in non-metrizable spaces.

\section{On the $\sigma(X,Z)$-approximate fixed point property in HTVS}
\label{sec:2}
Let $X$ be a Hausdorff topological vector space (abbreviated HTVS) with topology $\tau$, and let $C$ be a bounded, closed convex subset of $X$. Let us recall that a nonempty subset $A$ of $X$ is called bounded if for each zero-neighborhood $U$ in $X$, there exists a positive real number $r$ such that $A\subset rU$. Our first main result is the following theorem which is a generalization of \cite[Lemma 2.1]{Barroso-Lin}. Its proof is a simplified version of that given in \cite{Barroso-Lin}. In \cite{Barroso-Lin} the authors used paracompactness of metric spaces and Brouwer's fixed point theorem. The present proof avoids paracompactness. This allows the theorem to be very general -- it works for nonempty bounded convex sets in Hausdorff topological vector spaces with no more assumptions. Furthermore, we mention that the second part of its statement yields a generalization of \cite[Theorem 2.2]{Barroso-Lin} proved in Proposition~\ref{prop:sepdual} below.

\begin{thm}\label{prop:1sec2} Let $(X,\tau)$ be a Hausdorff topological vector space, $Z$ a subspace of its topological dual $X^*$, and let $C$ be a nonempty bounded convex subset of $X$. Assume that $f\colon C\to \overline C$ is a mapping which is $\tau$-to-$\sigma(X,Z)$ sequentially continuous. Then the following hold:
\begin{itemize}
	\item[(i)] $0\in \overline{\{x - f(x)\colon x\in C\}}^{\sigma(X,Z)}$
	\item[(ii)] If, moreover, $Z$ is separable in the strong topology (i.e., the topology of uniform convergence on $\tau$-bounded subsets of $X$), then there is a sequence $(z_n)$ in $C$ such that $z_n-f(z_n)$ converge to $0$ in the topology $\sigma(X,Z)$.
\end{itemize}
\end{thm}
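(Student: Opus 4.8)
The plan is to derive (i) from Brouwer's fixed point theorem by a finite-dimensional reduction, and then to bootstrap (ii) from (i) by exploiting that strong separability of $Z$ makes the topology $\sigma(X,Z)$ pseudometrizable on $\tau$-bounded sets. For (i), I would fix $z_1,\dots,z_n\in Z$ and $\e>0$ and aim to produce $x\in C$ with $|z_i(x-f(x))|<\e$ for all $i\le n$; since the sets $\{y\colon\max_i|z_i(y)|<\e\}$ form a base of $\sigma(X,Z)$-neighbourhoods of $0$, this is enough. Put $T=(z_1,\dots,z_n)\colon X\to\R^n$ (over $\C$, pass to $\R^{2n}$ by splitting into real and imaginary parts). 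Since each $z_i$ is $\tau$-continuous and $C$ is $\tau$-bounded, $K:=\overline{T(C)}$ is a compact convex subset of $\R^n$ and $T(\overline C)\subseteq K$. Using that $T(C)$ is dense in the totally bounded set $K$, pick $c_1,\dots,c_m\in C$ so that the balls $B(T(c_j),\e/3)$ cover $K$, fix a partition of unity $\{\phi_j\}_{j\le m}$ on $K$ subordinate to this cover, and let $D:=\co\{T(c_1),\dots,T(c_m)\}\subseteq K$. Then define $\Psi\colon K\to D$ by $\Psi(q)=\sum_j\phi_j(q)T(c_j)$ and the companion map $\Gamma\colon D\to C$ by $\Gamma(q)=\sum_j\phi_j(q)c_j$ (this lands in $C$ because $C$ is convex); one checks $T\circ\Gamma=\Psi|_D$ and $\|\Psi(q)-q\|<\e/3$ for every $q\in K$.

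Next I would consider $h:=\Psi\circ T\circ f\circ\Gamma\colon D\to D$. Here $\Gamma$ is $\tau$-continuous, $T\circ f$ is $\tau$-to-norm sequentially continuous (a $\tau$-convergent sequence is sent by $f$ to a $\sigma(X,Z)$-convergent one, hence by $T$ to a norm-convergent one), and $\Psi$ is continuous; as $D$ is metrizable, sequential continuity of $T\circ f\circ\Gamma$ upgrades to genuine continuity, so $h$ is a continuous self-map of the compact convex polytope $D$. Brouwer's theorem yields a fixed point $q_0\in D$. Setting $x_0:=\Gamma(q_0)\in C$ gives $T(x_0)=\Psi(q_0)$ and $\Psi(T(f(x_0)))=q_0$ with $T(f(x_0))\in K$, whence $\|T(x_0)-q_0\|<\e/3$ and $\|q_0-T(f(x_0))\|<\e/3$, so $\|T(x_0)-T(f(x_0))\|<\e$, i.e. $|z_i(x_0-f(x_0))|<\e$ for all $i$. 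As $z_1,\dots,z_n$ and $\e$ were arbitrary, $0\in\overline{\{x-f(x)\colon x\in C\}}^{\sigma(X,Z)}$, which is (i).

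For (ii), let $(y_k)_k$ be strongly dense in $Z$ and set $W:=\{x-f(x)\colon x\in C\}\cup\{0\}$. This set is $\tau$-bounded: it is contained in $(C-\overline C)\cup\{0\}$, and the $\tau$-closure of a $\tau$-bounded set is $\tau$-bounded. Since the strong topology on $Z$ is that of uniform convergence on $\tau$-bounded sets, a routine $\e/3$-estimate shows that on $W$ the topology $\sigma(X,Z)$ coincides with pointwise convergence on $(y_k)_k$, which is induced by the pseudometric $\rho(a,b)=\sum_k2^{-k}\min(1,|y_k(a-b)|)$; hence $(W,\sigma(X,Z)|_W)$ is pseudometrizable. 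By (i), $0$ belongs to the $\sigma(X,Z)$-closure of $\{x-f(x)\colon x\in C\}\subseteq W$, and in a pseudometric space a closure point is the limit of a sequence from the set; this yields $z_n\in C$ with $z_n-f(z_n)\to0$ in $\sigma(X,Z)$, proving (ii).

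The step I expect to be the real obstacle is establishing honest continuity of $h$ on $D$ out of the mere sequential continuity of $f$: the hypothesis only gives sequential continuity, and the crucial point is that $D$, being finite-dimensional and hence metrizable, converts this into the plain continuity that Brouwer's theorem demands. One must also be careful that $f$ takes values in $\overline C$ rather than $C$, but this is harmless since the $\tau$-continuous map $T$ sends $\overline C$ into the compact set $K$. The remaining ingredients — the covering and partition-of-unity construction in $\R^n$, and the pseudometrizability of $\sigma(X,Z)$ on $\tau$-bounded sets — are routine once this is arranged.
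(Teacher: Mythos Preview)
Your proof is correct and follows the same overall strategy as the paper --- a finite-dimensional reduction combined with Brouwer's theorem for (i), then strong density of $Z$ for (ii) --- but the execution differs in a way worth noting. For (i), the paper builds a finite-dimensional simplex $K$ \emph{inside} $C\subset X$ and constructs a self-map $F:K\to K$ in two steps: first a nearest-point selection $x\mapsto z_x$, then a partition of unity on $K$ (subordinate to a cover coming from continuity of $\Phi\circ f|_K$) to smooth it. You instead work in the image: the polytope $D\subset\mathbb{R}^n$, a single partition of unity on $K=\overline{T(C)}\subset\mathbb{R}^n$, a lift $\Gamma:D\to C$, and the Schauder-type projection $\Psi:K\to D$, getting the fixed-point equation in $\mathbb{R}^n$. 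This is a bit cleaner (one partition of unity instead of a selection followed by a partition) and makes the metrizability-upgrades-sequential-continuity step transparent since everything happens on $D\subset\mathbb{R}^n$. For (ii), the paper simply applies (i) to the functionals $x_1^*,\dots,x_n^*$ with tolerance $1/n$ to directly build the sequence, whereas you argue that $\sigma(X,Z)$ is pseudometrizable on the bounded set $W$; both work, the paper's argument being marginally more elementary and yours more conceptual. Your caution about $f$ landing in $\overline C$ rather than $C$ and about the sequential-to-genuine continuity passage is exactly right and matches what the paper does.
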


\begin{proof} (i) By the definition of the topology $\sigma(X,Z)$ it is enough to prove the following:

\medskip

{\it Given $x_1^*,\dots,x_n^*\in Z$ and $\epsilon>0$, there is $x\in C$ such that
$|x_i^*(x-f(x))|<\epsilon$ for $i=1,\dots,n$.}

\medskip

So, let us fix $x_1^*,\dots,x_n^*\in Z$ and $\epsilon>0$.
Equip the space $\mathbb{R}^n$ with the max-norm $\|\cdot\|_\infty$
and define the mapping $\Phi\colon \overline C\to \mathbb{R}^n$ by $\Phi(x)=(x_i^*(x))\sb{i=1}\sp{n}$. It is clear that $\Phi$ is a continuous linear mapping. Since $C$ is bounded in $X$, $\overline C$ is bounded as well and hence the set $\Phi(\overline C)$ is  bounded in $\mathbb{R}^n$. It follows that $\Phi(\overline C)$ is totally bounded. Consider the set $U=(-\epsilon/2,\epsilon/2)^n\subset \mathbb{R}^n$. (It is an open ball with respect to the max-norm.)
Next choose a finite set $A\subset \Phi(C)$ so that $\{ U+q\colon q\in A\}$ is an open cover of $\overline{\Phi(C)}$ (note that $\Phi(\overline C)\subset\overline{\Phi(C)}$ as $\Phi$ is continuous). For each $q\in A$ fix some $y_q\in C$ with $\Phi(y_q)=q$. Set $L=\{y\sb{q}\colon q\in A\}$ and $K=\co L$.
Then $K$ is a finite-dimensional compact convex subset of $C$.
 Now for each $x\in K$, fix $z\sb x\in L$  such that $\Phi(f(x))\in \Phi(z_x)+U$. Then
\[
|x_i^*(z\sb x - f(x))|<\epsilon/2, \qquad x\in K, i=1,\dots,n.
\]
Moreover, the restriction $f|_K$ is $\tau$-to-$\sigma(X,Z)$ continuous as $K$ is metrizable. Further, $\Phi$ is $\sigma(X,Z)$-continuous, hence the composed mapping $\Phi\circ f|_K$ is $\tau$-continuous.
Therefore we can, for each $x\in K$, choose a $\tau$-open neighborhood $U\sb x$ of $x$ (relatively in $K$) such that for any $y\in U\sb x$,
\[
|x_i^*(f(y)-f(x))|<\epsilon/2,\qquad i=1,\dots,n.
\]
 It follows that $\{U\sb x\colon x\in K\}$ is an open covering of $K$. Since $K$ is compact, there are $x_1,\dots,x_m\in K$ such that $\{U_{x_i}\colon i=1,\dots,m\}$ is a cover of $K$. Fix a partition of unity $\{\phi_i\colon i=1,\dots,m\}$ on $K$ dominated by $\{U_{x_i}\colon i=1,\dots,m\}$.
Then the mapping $F\colon K\to K$ given by
\[
F(y) =\sum\sb{i=1}^m\phi\sb i(y)z\sb {x_i},\quad y\in K,
\]
is continuous. By Brouwer's fixed point theorem, it has a fixed point $z\in K$. If $\phi_i(z)\neq 0$, then $z\in U\sb {x_i}$ and hence
\[
|x_j^*(f(z)-f(x_i))|<\epsilon/2, \qquad j=1,\dots,n.
\]
This in turn implies that
\begin{align*}
|x_j^*(z-f(z))|&=\Big| \sum\sb{i=1}^m\phi\sb i(z)x_j^*(z\sb {x_i} -f(z))\Big|\\
&\leq  \sum\sb{i=1}^m\phi\sb i (z)\big( |x_j^*(z\sb {x_i} -f(x_i))|+|x_j^*(f(x_i)-f(z))|\big)\\
&< \epsilon,
\end{align*}
for $j=1,\dots,n$. This completes the proof.

(ii) Let $\{x^*\sb i\}$ be a  strongly dense sequence in $Z$.
By  the assertion (i) we can find for any $n\in\mathbb{N}$  a point $z\sb{n}$ in $C$ so that
\[
|x\sb i^*(z\sb{n} - f(z\sb{n}))|<\frac1n,
\]
for all $i=1,\dots, n$. Then for all integer $i\geq 1$, $|x\sb i^*(z\sb n -f(z\sb n))|\to 0$ as $n\to\infty$. The denseness of $\{x\sb i^*\}$ in the strong topology on $Z$ implies $z\sb n - f(z\sb n)\to 0$ with regarding the topology $\sigma(X,Z)$. This completes the proof.
\end{proof}

As an immediate consequence we get an easy proof of a well-known result of K.~Fan (see \cite{Fan}).

\begin{cor}\label{cor:1sec2}
Let $X$ be a Hausdorff topological vector space such that its topological dual $X^*$ separates the points of $X$. (This is satisfied, for example, if $X$ is locally convex.) Let $C\subset X$ be a nonempty compact convex set. Then each continuous mapping $f:C\to C$ has a fixed point.
\end{cor}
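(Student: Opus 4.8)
The plan is to deduce this immediately from part~(i) of Theorem~\ref{prop:1sec2}, applied with $Z=X^*$. First I would record the following elementary facts about the weak topology $\sigma(X,X^*)$: since $X^*$ separates the points of $X$, this topology is Hausdorff; and since every element of $X^*$ is by definition $\tau$-continuous, $\sigma(X,X^*)$ is coarser than $\tau$. (When $X$ is locally convex, the hypothesis that $X^*$ separates points is precisely a consequence of the Hahn--Banach theorem.) Because $C$ is $\tau$-compact it is $\tau$-bounded; it is convex and nonempty by hypothesis; and, being compact in a Hausdorff space, it is $\tau$-closed, so $\overline C=C$. Consequently $f\colon C\to C=\overline C$ is $\tau$-to-$\sigma(X,X^*)$ continuous --- indeed it is $\tau$-to-$\tau$ continuous and $\sigma(X,X^*)\subseteq\tau$ --- hence, in particular, $\tau$-to-$\sigma(X,X^*)$ sequentially continuous.

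Thus all hypotheses of Theorem~\ref{prop:1sec2} are met, and assertion~(i) yields
\[
0\in\overline{\{x-f(x)\colon x\in C\}}^{\,\sigma(X,X^*)}.
\]
Next I would promote this approximate statement to an exact one. The map $g\colon C\to X$ defined by $g(x)=x-f(x)$ is $\tau$-continuous, since subtraction is continuous in a topological vector space and $f$ is $\tau$-continuous. Hence $g(C)=\{x-f(x)\colon x\in C\}$ is $\tau$-compact, therefore $\sigma(X,X^*)$-compact, and therefore $\sigma(X,X^*)$-closed, because $\sigma(X,X^*)$ is Hausdorff. So the closure displayed above coincides with $g(C)$ itself, and we conclude $0\in g(C)$: there exists $x\in C$ with $x-f(x)=0$, i.e.\ $f(x)=x$.

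The argument is essentially immediate once Theorem~\ref{prop:1sec2} is available; the only points requiring a moment's care are (a)~invoking the separation hypothesis to see that $\sigma(X,X^*)$ is Hausdorff, which is what lets us pass from the $\sigma(X,X^*)$-compactness of $g(C)$ to its closedness and hence from the approximate conclusion~(i) to a genuine fixed point, and (b)~noting $\overline C=C$ so that the codomain requirement $f\colon C\to\overline C$ in Theorem~\ref{prop:1sec2} is satisfied. No further estimate or construction is needed.
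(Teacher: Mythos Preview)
Your proof is correct and follows essentially the same route as the paper's: apply Theorem~\ref{prop:1sec2}(i) with $Z=X^*$ to get $0$ in the weak closure of $\{x-f(x):x\in C\}$, then use that this set is $\tau$-compact (as a continuous image of $C$), hence weakly compact, hence weakly closed since $\sigma(X,X^*)$ is Hausdorff. Your write-up is simply more explicit in verifying the hypotheses of Theorem~\ref{prop:1sec2} (boundedness of $C$, $\overline C=C$, and the required continuity of $f$), but the argument is the same.
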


\begin{proof}
Set $A=\{x-f(x):x\in C\}$. Then $A$ is compact
as the image of $C$ by a continuous map $x\mapsto x-f(x)$. So, $A$ is also weakly compact. Since $X^*$ separates points of $X$, the weak topology is Hausdorff and hence $A$ is weakly closed.  By the previous theorem $0$ belongs to the weak closure of $A$, hence $0\in A$, i.e., $f$ has a fixed point.
\end{proof}

\begin{rem} We refer the reader to \cite{Jafari-Sehgal} for other comments on Ky Fan's theorem.
\end{rem}
In the following proposition we collect some situations in which the assertion (ii) of Theorem~\ref{prop:1sec2} can be applied.

\begin{pro}\label{prop:sepdual} Let $X$ be a normed space. Then the following statements hold true:
\begin{itemize}
	\item[(i)] Assume that the completion of $X$ is an Asplund space. Let $\tau$ be a locally convex topology on $X$ compatible with the duality. Then $(X,\tau)$ has the weak-AFPP.
	\item[(ii)] If $(X^*,w^*)$ is $\aleph_0$-monolithic (i.e., each separable subset of $(X^*,w^*)$ has countable network), then $(X^*,\|\cdot\|)$ has the $\sigma(X^*,X)$-AFPP.
\end{itemize}
\end{pro}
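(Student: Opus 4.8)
The plan is, in both parts, to reduce to a setting where Theorem~\ref{prop:1sec2}(ii) applies directly. Fix a nonempty bounded closed convex set $C$ --- inside $(X,\tau)$ in case~(i), inside $(X^*,\|\cdot\|)$ in case~(ii) --- together with a continuous self-map $f$ of $C$ (continuous meaning $\tau$-continuous, resp. norm-continuous). First I would run a closing-off argument to produce a \emph{separable} closed convex set $D\subseteq C$ with $f(D)\subseteq D$: starting from $S_0=\{c_0\}$ for some $c_0\in C$, given a countable set $S_n\subseteq C$ let $D_n$ be the closed convex hull of $S_n$, pick a countable norm-dense subset $E_n$ of $D_n$, set $S_{n+1}=S_n\cup f(E_n)$, and finally let $D$ be the closure of $\bigcup_n D_n$. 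Then $D$ is bounded, closed, convex and separable. The one delicate point is $f(D)\subseteq D$ in case~(i), where $f$ is only $\tau$-continuous: a point of $D$ is a norm- (hence $\tau$-) limit of points of the $D_n$'s, each of which is in turn a norm-limit of points of the corresponding $E_n$; passing to $\tau$-limits one stays in the $\tau$-closure of $D$, and a norm-closed convex set is $\tau$-closed (Mazur), so $f(D)\subseteq\overline{D}^{\,\tau}=D$. In case~(ii) the same works with plain norm limits.

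For case~(i), let $W$ be the closed linear span of $D$, a separable normed space, and apply Theorem~\ref{prop:1sec2}(ii) to the bounded convex set $D\subseteq W$, the map $f|_D\colon D\to D=\overline{D}$, and $Z=W^*$. Two verifications are needed. First, $\tau|_W$ is Hausdorff, locally convex, and has dual $W^*$ --- every $\tau|_W$-continuous functional extends, by Hahn--Banach, to one in $(X,\tau)^*=X^*$, and conversely --- so $\tau|_W$ is compatible with the duality $\langle W,W^*\rangle$ and $f|_D$ is $\tau|_W$-to-$\sigma(W,W^*)$ continuous. Second, $W^*$ is separable in the strong topology: that topology is just the norm topology (the $\tau|_W$-bounded and the norm-bounded subsets of $W$ coincide), and $W^*=(\widetilde W)^*$, where the completion $\widetilde W$ is a separable closed subspace of the Asplund space $\widetilde X$, hence separable Asplund, hence has separable dual. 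The theorem then yields $(z_n)\subseteq D$ with $z_n-f(z_n)\to0$ for $\sigma(W,W^*)$; as $z_n,f(z_n)\in W$, restricting the elements of $X^*$ to $W$ gives $z_n-f(z_n)\to0$ weakly in $X$. Hence $(X,\tau)$ has the weak-AFPP.

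For case~(ii) the ambient space is $(X^*,\|\cdot\|)$; take $D\subseteq C$ and $W$ the closed linear span of $D$ in $X^*$ as before, but now I must exhibit a norm-separable subspace $Z\subseteq X\subseteq X^{**}$ with $\sigma(X^*,Z)=\sigma(X^*,X)$ on the bounded set $D-D\subseteq W$. This is where $\aleph_0$-monolithicity is used. Since $W$ is norm-separable, the $w^*$-compact set $K:=\overline{B_W}^{w^*}$ is $w^*$-separable, hence has a countable network by hypothesis; being compact Hausdorff, it is then second countable, hence metrizable. A $w^*$-metrizable $w^*$-compact subset of $X^*$ has its $w^*$-topology generated by countably many functionals from $X$; let $Z$ be their linear span. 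Then $\sigma(X^*,Z)$ and $\sigma(X^*,X)$ agree on $K$, hence on every bounded subset of $W$. Now Theorem~\ref{prop:1sec2}(ii) applies with this $Z$: it is norm-separable, i.e. separable in the strong topology of $X^*$, and $f|_D\colon D\to D=\overline{D}$ is norm-to-$\sigma(X^*,Z)$ sequentially continuous. We obtain $(z_n)\subseteq D$ with $z_n-f(z_n)\to0$ for $\sigma(X^*,Z)$, and since $(z_n-f(z_n))$ is bounded and lies in $W$, also $z_n-f(z_n)\to0$ for $\sigma(X^*,X)$, i.e. weak$^*$. Thus $X^*$ has the $\sigma(X^*,X)$-AFPP.

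I expect the real work to sit in two places. The first is making the closing-off coexist with the mere $\tau$-continuity of $f$ in case~(i); the Mazur observation rescues it, but it must be invoked at the right step. The second, and more serious, is extracting from $\aleph_0$-monolithicity exactly the \emph{sequential} conclusion Theorem~\ref{prop:1sec2}(ii) is built to consume: that on the separable bounded set $D-D$ the weak$^*$ topology is controlled by countably many elements of $X$. This in turn relies on two classical facts --- a compact Hausdorff space with a countable network is metrizable, and a $w^*$-metrizable $w^*$-compact set is ``$w^*$-countably determined''. The remaining points (Hahn--Banach extension, $W^*=(\widetilde W)^*$, separable subspaces of Asplund spaces having separable duals, strong topology $=$ norm topology) are routine.
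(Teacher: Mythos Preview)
Your proof is correct. For part~(i) it follows the same line as the paper's: pass to a separable $f$-invariant closed convex subset, use the Mazur-type fact that norm-closed convex sets are $\tau$-closed (so that the reduction interacts correctly with the mere $\tau$-continuity of $f$), and then invoke the separable-Asplund property together with Theorem~\ref{prop:1sec2}(ii). The packaging differs only cosmetically: the paper builds a $\tau$-separable $D$ via $D_n=\co(D_{n-1}\cup f(D_{n-1}))$ and afterwards argues it is norm-separable, whereas you build $D$ norm-separable from the outset and use Mazur to secure $f(D)\subset D$.

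For part~(ii) your route genuinely differs from the paper's. The paper passes to the quotient $Z=X/C_\perp$: the adjoint $q^*:Z^*\to X^*$ identifies $Z^*$ isometrically and weak*-to-weak* with the weak*-closed linear span of $C$, which is weak*-separable; $\aleph_0$-monolithicity then gives the weak* topology of $Z^*$ a countable network, so $(B_{Z^*},w^*)$ is metrizable, hence $Z$ is separable; Theorem~\ref{prop:1sec2}(ii) is applied in $Z^*$ and the conclusion transferred back through $q^*$. You instead stay inside $X^*$: from the norm-separable $D$ you form the weak*-compact set $K=\overline{B_W}^{\,w^*}$, use monolithicity to make $K$ metrizable, and then extract countably many $x_n\in X$ determining the weak* topology on $K$, so that Theorem~\ref{prop:1sec2}(ii) applies directly with $Z=\spn\{x_n\}$. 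Both arguments pivot on the same classical fact (a compact Hausdorff space with countable network is metrizable); the paper's version is more structural, yours more hands-on, and yours avoids the bookkeeping of checking that $q^*$ transports both the norm and the weak* topology correctly.
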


\begin{proof}
Let us first make the following easy observation: If $X$ is a HTVS, $C\subset X$ a nonempty closed convex bounded set and $f:C\to C$ a continuous mapping, then
there is a nonempty separable closed convex set $D\subset C$ with $f(D)\subset D$.
Indeed, take any $x_0\in C$ and set $D_0=\{x_0\}$. For $n\in\mathbb{N}$ define by induction $D_n=\co(D_{n-1}\cup f(D_{n-1}))$.
Finally, set
\[
D=\overline{\bigcup_{n=0}^\infty D_n}.
\]
Then $D$ has the required property. Now let us proceed to the proof itself:

(i) Let $C\subset X$ be a nonempty bounded closed convex set and $f:C\to C$ be a $\tau$-continuous mapping. Let $D\subset C$ be nonempty $\tau$-separable closed convex set with $f(D)\subset D$. Then $D$ is clearly norm-separable. Indeed, let $S\subset D$ be a countable $\tau$-dense set. Denote by $S'$ the norm-closed convex hull of $S$. Then $S'$ is norm-separable. Moreover, as it is a closed convex set, it is also weakly closed by a consequence of Hahn-Banach separation theorem. Hence it is $\tau$-closed as well, so in particular $D\subset S'$. It follows that $D$ is norm-separable.

Therefore the closed linear span of $D$ is norm-separable as well. So, we can without loss of generality suppose that $X$ is separable. By our assumption $X^*$ is separable,
we can thus conclude by Theorem~\ref{prop:1sec2}-(ii).

(ii) It is enough to show that each nonempty separable closed convex bounded subset of $X^*$ has the $\sigma(X^*,X)$-AFPP. Let $C\subset X^*$ be such a set.  Set $Y= C_\perp$ and denote by $Z$ the quotient space $X/Y$. Denote by $q$ the canonical quotient map $q:X\to Z$. The adjoint map $q^*:Z^*\to X^*$ is an isometric injection which is, moreover, weak*-to-weak* homeomorphism. The image $q^*(Z^*)$ is equal to $Y^\perp=(C_\perp)^\perp$, which is (by the bipolar theorem) the weak* closed linear span of $C$. It follows that
$q^*(Z^*)$ is weak*-separable, hence the weak* topology of $Z^*$ has countable network. Therefore the dual ball $(B_{Z^*},w^*)$ is metrizable, thus $Z$ is separable.
By Theorem~\ref{prop:1sec2}-(ii) we get that $Z^*$ has the $\sigma(Z^*,Z)$-AFPP. As $q^*$ is both an isometry and weak*-to-weak* homeomorphism, we get that $C$ has the $\sigma(X^*,X)$-AFPP.
\end{proof}

We conclude this section with two instructive examples.

\begin{example} Let $X$ be a Banach space. Then $X^*$ has the $\sigma(X^*,X)$-AFPP in the following cases:
\begin{itemize}
	\item $X$ is separable.
	\item $X$ is weakly compactly generated. In particular, $X=c_0(\Gamma)$ for any set $\Gamma$ or $X=L^1(\mu)$ for a $\sigma$-finite measure $\mu$.
	\item $X$ is weakly Lindel\"of determined.
\end{itemize}
\end{example}

\begin{rem}
We recall that a Banach space $X$ is called {\it weakly compactly generated}
if there is a weakly compact subset $K\subset X$ whose linear span is dense in $X$. Basic properties of this class of Banach spaces can be found for example in \cite[Section 1.2]{fabian}.
Further, $X$ is {\it weakly Lindel\"of determined} provided there is $M\subset X$ with dense linear span such that for each $x^*\in X^*$ there are only countably many $x\in M$ with $x^*(x)\ne 0$. We refer also the reader for example to \cite{K-survey} for complements on these notions.
\end{rem}

\begin{rem}
Any separable space is weakly compactly generated and any weakly compactly generated space is weakly Lindel\"of determined (see, e.g., \cite[Theorem 1.2.5]{fabian}). On the other hand, if $X$ is weakly Lindel\"of determined, then $(X^*,w^*)$ is $\aleph_0$-monolithic. Indeed, it is an easy consequence of the definitions that any bounded separable subset of $(X^*,w^*)$ is metrizable. Therefore the previous example can be proved.
\end{rem}

\begin{example}
\begin{itemize}
	\item If $X=c_0$, then $X^*=\ell_1$ has the $\sigma(X^*,X)$-AFPP, but does not have the weak AFPP.
	\item If $X=\ell_\infty$, then $X^*$ does not have the $\sigma(X^*,X)$-AFPP.
\end{itemize}
\end{example}

\begin{proof} As $c_0$ is separable, by the previous example $c_0^*$ has the $\sigma(X,X^*)$-AFPP. Further, $\ell_1$ does not have the weak-AFPP (see, e.g., Theorem~\ref{thm:FSWAFPP} below).

The space $X=\ell_\infty$ is a Grothendieck space, i.e., the weak and weak* convergences of sequences in $X^*$ coincide. So, if $X^*$ had $\sigma(X^*,X)$-AFPP, then it would have also the weak-AFPP. But it is not the case as $X^*$ contains an isometric copy of $\ell_1$.
\end{proof}

\section{The weak-AFPP in metrizable LCS and $\ell_1$-sequences}\label{sec:3}

As we have already mentioned, Theorem~\ref{prop:1sec2} and Proposition~\ref{prop:sepdual} yield a generalization of the main result of \cite{Barroso-Lin}. Another strengthening of this result is given in \cite{Kalenda}, where it is proved, in particular, that a Banach space has the weak-AFPP if and only if it contains no copy of $\ell_1$. The key point in the proof in \cite{Kalenda} is the verification of the Fr\'echet-Urysohn property for certain sets with respect to the weak topology. Recall that a topological space $S$ is called Fr\'echet-Urysohn if the closures of subsets of $S$ are described using sequences, i.e. if whenever $A\subset S$ and $x\in S$ is such that $x\in \overline{A}$, then a sequence $\{x_n\}$ in $A$ can be achieved so that $x_n\to x$. In view of the results of the previous section it is therefore natural to ask whether the results of \cite{Kalenda} can be generalized to the context of locally convex spaces (LCS, in short). The aim of this section is to  show that, indeed, the same results can be proved for metrizable LCS by similar methods.

We briefly recall that the Rosenthal $\ell\sb 1$-theorem and a powerful theorem of Bourgain, Fremlin and Talagrand \cite[Theorem 3F]{BFT} were the two striking tools used in \cite{Kalenda} for getting the Fr\'echet-Urysohn property of the weak closure of the set $C-C$ with regarding the weak topology inherited from $X$. We also observe that there is a generalization of Rosenthal's theorem to Fr\'echet spaces which, it seems, has been firstly obtained by D\'\i az in \cite[Lemma 3]{Diaz}. Thus the starting point for proving promised generalizations is to understand what it means for a sequence in a LCS be equivalent to the unit basis of $\ell\sb 1$.
To clarify it let us first fix some notation. We denote by $\ell_1^0$ the subspace of $\ell_1$ formed by elements with only finitely many nonzero coordinates. Then the following definition is natural.

\begin{definition}\label{def:1sec3}
Let $X$ be a topological vector space and $(x_n)$ a sequence in $X$. We say that $(x_n)$ is an {\it $\ell_1$-sequence} if the mapping $T_0:\ell_1^0\to X$ defined by
\begin{equation}\label{eq-T0}
T_0((a\sb i)_{i=1}^\infty)=\sum_{i=1}^\infty a\sb ix\sb i,\qquad (a_i)\in\ell_1^0
\end{equation}
is an isomorphism of $\ell_1^0$ onto $T_0(\ell_1^0)$.
\end{definition}

It is clear that any $\ell_1$-sequence is automatically bounded.
In normed spaces this definition coincides with the standard one which is witnessed by the following well-known proposition:

\begin{pro}\label{prop:1sec3} Let $(X,\|\cdot\|)$ be a normed space and $(x\sb n)\subset X$ a bounded sequence. The following are equivalent:
\begin{itemize}
\item[(i)] There is constant $M>0$ such that $\|\sum\sb{i=1}^n a\sb ix\sb i\|\geq M \sum\sb{i=1}^n|a\sb i|$, for any $n\in\mathbb{N}$ and any choice of $a_1,\dots, a_n\in \mathbb{R}$.
\item[(ii)] $(x_n)$ is an $\ell_1$-sequence.
\end{itemize}
If $X$ is complete, then these conditions are equivalent to the following:
\begin{itemize}
\item[(iii)] The mapping $T\colon \ell\sb 1\to X$ defined by $T((a\sb i))=\sum\sb i a\sb ix\sb i$ is a well defined isomorphism of $\ell\sb 1$ onto its image in $X$.
\end{itemize}
\end{pro}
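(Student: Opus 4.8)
The plan is to regard this proposition as mostly a matter of unwinding what ``isomorphism onto its image'' means, the only genuine content lying in the passage from $\ell_1^0$ to $\ell_1$ when $X$ is complete. I would start by fixing $K=\sup_n\|x_n\|<\infty$ and recording the trivial upper estimate $\|\sum_i a_ix_i\|\le K\sum_i|a_i|$ for $(a_i)\in\ell_1^0$; this makes $T_0$ automatically continuous, so in all three conditions the real issue is the \emph{lower} estimate, and boundedness of $(x_n)$ is exactly what supplies the upper one.

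For $(\mathrm{i})\Leftrightarrow(\mathrm{ii})$: if $(\mathrm{ii})$ holds then $T_0$ is a linear homeomorphism onto $T_0(\ell_1^0)$ with its subspace topology, so $T_0^{-1}$ is continuous, which gives $M>0$ with $\|T_0a\|\ge M\|a\|_1$ on $\ell_1^0$; reading this off on $a=(a_1,\dots,a_n,0,0,\dots)$ is precisely $(\mathrm{i})$. Conversely $(\mathrm{i})$ is literally the statement $\|T_0a\|\ge M\|a\|_1$ on $\ell_1^0$, and together with the free bound $\|T_0a\|\le K\|a\|_1$ it shows that $T_0$ is injective, continuous, and has a Lipschitz inverse on its range, hence is an isomorphism onto $T_0(\ell_1^0)$. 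I would remark here that boundedness of $(x_n)$ cannot be dropped, since without it $(\mathrm{i})$ need not even make $T_0$ continuous.

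Finally, assuming $X$ complete: $(\mathrm{iii})\Rightarrow(\mathrm{i})$ is immediate, because the restriction of $T$ to $\ell_1^0$ is $T_0$ and continuity of $T^{-1}$ on $T(\ell_1)$ restricts to the lower bound in $(\mathrm{i})$. For $(\mathrm{i})\Rightarrow(\mathrm{iii})$ — the one step needing a little care — I would use the two-sided estimate $M\|a\|_1\le\|T_0a\|\le K\|a\|_1$ on $\ell_1^0$ and the density of $\ell_1^0$ in $\ell_1$ to extend the Lipschitz map $T_0$ uniquely to a bounded operator $T\colon\ell_1\to X$ (this is where completeness of $X$ enters); by continuity the same two-sided estimate persists on all of $\ell_1$, so $T$ is an isomorphism of $\ell_1$ onto $T(\ell_1)$. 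To see that $T$ is given by the stated formula, for $a=(a_i)\in\ell_1$ I would take the truncations $a^{(n)}=(a_1,\dots,a_n,0,\dots)\to a$ in $\ell_1$, whence $T a^{(n)}\to Ta$ in $X$ while $T a^{(n)}=T_0a^{(n)}=\sum_{i=1}^n a_ix_i$, so the series $\sum_i a_ix_i$ converges to $Ta$. I do not expect a real obstacle here: the ``hard'' part is just this density/extension argument, together with being disciplined about the fact that ``isomorphism onto its image'' always splits into a lower bound (the content) and the automatic upper bound.
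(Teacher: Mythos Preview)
Your proof is correct. The paper does not actually prove this proposition: it is introduced as ``well-known'' and stated without proof, followed only by a remark that (iii) can fail in incomplete spaces. The paper does, however, prove the locally convex analogue (Proposition~\ref{prop:2sec3}), and the argument there mirrors yours: boundedness of $(x_n)$ makes $T_0$ continuous, (i)$\Rightarrow$(ii) is the direct observation that the lower bound gives a continuous inverse, (ii)$\Rightarrow$(i) recovers a lower bound from continuity of $T_0^{-1}$ (via the Minkowski functional in the LCS case, which in the normed case reduces to exactly your estimate), and the passage to (iii) is the unique continuous extension of $T_0$ from the dense subspace $\ell_1^0$ using (sequential) completeness. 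So your approach is essentially the same as the one the paper uses for the general statement.
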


\begin{rem}\label{rem:1sec3} Observe that if $X$ is not complete, then in general (iii) needs not follow from (ii). Indeed, $X=\ell_1^0$ contains an $\ell_1$-sequence but does not contain a copy of $\ell\sb 1$.
\end{rem}

For locally convex spaces we have an analogous result. We recall that a locally convex space $X$ is said to be {\it sequentially complete} if each Cauchy sequence in $X$ converges (cf. \cite[page 210]{kothe}).

\begin{pro}\label{prop:2sec3} Let $X$ be a LCS and $(x\sb n)$ a bounded sequence in $X$. The following are equivalent:
\begin{itemize}
	\item[(i)] There is a continuous seminorm $p$ on $X$ such that
	\begin{equation*}
p\left(\sum_{i=1}^n a_i x_i\right)\geq  \sum_{i=1}^n |a\sb i|,\quad n\in\mathbb{N},a_1,\dots,a_n\in\mathbb{R}.
\end{equation*}
\item[(ii)] $(x_n)$ is an $\ell_1$-sequence.
\end{itemize}
If $X$ is sequentially complete, then these conditions are equivalent to the following:
\begin{itemize}
\item[(iii)] The mapping $T\colon \ell\sb 1\to X$ defined by $T((a\sb i))=\sum\sb i a\sb ix\sb i$ is a well defined isomorphism of $\ell\sb 1$ onto its image in $X$.
\end{itemize}
\end{pro}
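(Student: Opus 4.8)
The plan is to prove the equivalence (i)$\Leftrightarrow$(ii) for an arbitrary LCS $X$, and then, under the extra hypothesis of sequential completeness, to add the implications (i)$\Rightarrow$(iii) and (iii)$\Rightarrow$(ii); together these yield all the asserted equivalences. Throughout I write $\|a\|_1=\sum_i|a_i|$ for $a\in\ell_1$, so that condition (i) reads $\|a\|_1\le p(T_0(a))$ for all $a\in\ell_1^0$, where $T_0$ is the map from Definition~\ref{def:1sec3}.

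The first observation is that $T_0$ is automatically continuous: for any continuous seminorm $q$ on $X$ the boundedness of $(x_n)$ gives $M_q:=\sup_i q(x_i)<\infty$, hence $q(T_0(a))\le\sum_i|a_i|\,q(x_i)\le M_q\|a\|_1$, and since the continuous seminorms generate the topology of $X$ this is continuity of $T_0$. Given this, (i)$\Rightarrow$(ii) is immediate: the inequality $\|a\|_1\le p(T_0(a))$ forces $T_0$ to be injective and shows that the inverse of $T_0$ on $T_0(\ell_1^0)$ is continuous (the single continuous seminorm $p$ already dominates the $\ell_1$-norm), so $T_0$ is an isomorphism onto its image. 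For (ii)$\Rightarrow$(i), continuity of $T_0^{-1}$ at $0$ provides a neighborhood $W$ of $0$ in $X$ with $T_0^{-1}(W\cap T_0(\ell_1^0))$ contained in the open unit ball of $\ell_1^0$; replacing $W$ by a convex balanced subneighborhood and passing to its Minkowski functional, I may assume $W=\{x:q(x)<1\}$ for a single continuous seminorm $q$. A homogeneity argument then upgrades the inclusion $q(T_0(a))<1\Rightarrow\|a\|_1<1$ to the pointwise bound $\|a\|_1\le q(T_0(a))$: for $a\ne 0$ one first checks $q(T_0(a))>0$ (otherwise $\lambda a\in W$ for every $\lambda>0$, forcing $a=0$), then applies the inclusion to $b=\tfrac{t}{q(T_0(a))}a$ for $t\in(0,1)$ and lets $t\uparrow 1$. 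Taking $p=q$ gives (i). This normalization step — and in particular the passage from a basic neighborhood that a priori involves finitely many seminorms to a single one — is the one point requiring a little care, and is the main (modest) obstacle; everything else is bookkeeping with seminorms.

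Finally, assume $X$ is sequentially complete. For (i)$\Rightarrow$(iii): given $a\in\ell_1$, the partial sums $s_N=\sum_{i=1}^N a_ix_i$ are Cauchy, since for $M>N$ and any continuous seminorm $q$ one has $q(s_M-s_N)\le M_q\sum_{i=N+1}^M|a_i|\to 0$; by sequential completeness $s_N$ converges, and $T(a):=\lim_N s_N$ defines a linear map on all of $\ell_1$ extending $T_0$. Continuity of each $q$ gives $q(T(a))\le M_q\|a\|_1$, so $T$ is continuous, while passing the inequality of (i) to the limit — using $p(s_N)\ge\sum_{i=1}^N|a_i|$ together with $p(s_N)\to p(T(a))$ — yields $p(T(a))\ge\|a\|_1$, whence $T$ is injective with continuous inverse on its image, i.e.\ an isomorphism of $\ell_1$ onto $T(\ell_1)$. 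For (iii)$\Rightarrow$(ii): the subspace topology that $\ell_1^0$ inherits from $\ell_1$ is precisely its $\ell_1$-norm topology, so the restriction $T|_{\ell_1^0}=T_0$ is again an isomorphism onto its image, which is (ii). This closes all the implications.
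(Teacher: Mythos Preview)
Your proof is correct and follows essentially the same route as the paper's: both show $T_0$ is automatically continuous from boundedness plus local convexity, both handle (ii)$\Rightarrow$(i) by pulling back the unit ball through $T_0^{-1}$ to an absolutely convex neighborhood and taking its Minkowski functional, and both treat the sequentially complete case by extending $T_0$ to $\ell_1$ via Cauchy sequences. You spell out two steps the paper leaves implicit---the homogeneity argument upgrading $q(T_0(a))<1\Rightarrow\|a\|_1<1$ to $\|a\|_1\le q(T_0(a))$, and the verification that the extended map $T$ remains an isomorphism by passing the seminorm inequality of (i) to the limit---and you organize the sequentially complete part as (i)$\Rightarrow$(iii) and (iii)$\Rightarrow$(ii) rather than the paper's direct (ii)$\Leftrightarrow$(iii), but these are cosmetic differences, not a different argument.
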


\begin{proof} Let $T_0:\ell_1^0\to X$ be defined by \eqref{eq-T0}. As $(x_n)$ is bounded and $X$ is locally convex, it is easy to check that $T_0$ is continuous.

Further, if (i) holds, then $T_0$ is clearly one-to-one and $T_0^{-1}$ is continuous. This proves (i)$\Rightarrow$(ii).

Conversely, suppose that (ii) holds. Set
$$U=T_0\left(\left\{x\in \ell_1^0:\|x\|_{\ell_1}<1\right\}\right).$$
As $T_0$ is an isomorphism, $U$ is an absolutely convex open subset of $T_0(\ell_1^0)$. We can find $V$, an absolutely convex neighborhood of $0$ in $X$ such that $V\cap T_0(\ell_1^0)\subset U$. Let $p$ be the Minkowski functional of $V$. Then $p$ is a continuous seminorm witnessing that (i) holds.
This proves (ii)$\Rightarrow$(i).

Now suppose that $X$ is sequentially complete.
As $T\sb 0$ is continuous and linear, it is uniformly continuous and hence it maps Cauchy sequences to Cauchy sequences. In particular, the mapping $T\sb 0$ can be uniquely extended to a continuous linear mapping $T\colon \ell\sb 1\to X$. This is obviously the mapping described in (iii). As $\ell_1^0$ is dense in $\ell_1$, we get (ii)$\Leftrightarrow$(iii).
\end{proof}

Now we are able to formulate the following theorem, which is a generalization of \cite[Theorem 1.2]{Kalenda} to the context of metrizable locally convex spaces.

\begin{thm}\label{thm:FSWAFPP} Let $X$ be a metrizable LCS and $C\subset X$ a nonempty closed convex bounded subset of $X$. The following assertions are equivalent.
\begin{itemize}
	\item[(1)] Each nonempty closed convex subset of $C$ has the weak-AFPP.
	\item[(2)] $C$ contains no $\ell_1$-sequence.
\end{itemize}
\end{thm}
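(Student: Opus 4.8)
The plan is to prove the two implications separately. The implication $(2)\Rightarrow(1)$ will be a combination of Theorem~\ref{prop:1sec2}(i) (a Brouwer-type fact) with the metrizable form of Rosenthal's $\ell_1$-theorem and with the Fr\'echet--Urysohn property of $(\overline{C-C},w)$ -- these last two being exactly the ingredients announced in the introduction as Theorem~\ref{thm:1sec3} and Lemma~\ref{lem:FSWAFPP}, which I shall take for granted here. The implication $(1)\Rightarrow(2)$ I would prove in contrapositive form, by transferring a Lin--Sternfeld map from $\ell_1$ to a closed convex subset of $C$.

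First, $(2)\Rightarrow(1)$. Assume $C$ contains no $\ell_1$-sequence. Since $X$ is metrizable, every $\tau$-continuous map is $\tau$-to-$\tau$ sequentially continuous, hence $\tau$-to-$\sigma(X,X^*)$ sequentially continuous because the weak topology is coarser than $\tau$. By the metrizable Rosenthal theorem (Theorem~\ref{thm:1sec3}), every bounded sequence in $C$, and so every sequence in the bounded set $C-C$, has a weakly Cauchy subsequence; Lemma~\ref{lem:FSWAFPP} then shows that $(\overline{C-C}^{\,w},w)$ is Fr\'echet--Urysohn (note that $C-C$ is convex and bounded, so its $\tau$- and weak closures coincide). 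Now let $K\subset C$ be nonempty, closed and convex and let $f\colon K\to K$ be continuous. As $K$ is closed, $f$ maps $K$ into $\overline K=K$ and is $\tau$-to-$\sigma(X,X^*)$ sequentially continuous, so Theorem~\ref{prop:1sec2}(i) applied with $Z=X^*$ gives $0\in\overline{A}^{\,w}$, where $A=\{x-f(x)\colon x\in K\}$. Since $A\subset K-K\subset C-C\subset\overline{C-C}^{\,w}$ and the latter space is Fr\'echet--Urysohn, there are $x_n\in K$ with $x_n-f(x_n)\to 0$ weakly. Hence every nonempty closed convex subset of $C$ has the weak-AFPP.

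Next, $(1)\Rightarrow(2)$, by contraposition. Suppose $(x_n)$ is an $\ell_1$-sequence contained in $C$. By Proposition~\ref{prop:2sec3} there is a continuous seminorm $p$ on $X$ with $p(\sum_{i=1}^m a_ix_i)\ge\sum_{i=1}^m|a_i|$ for all $m$ and all scalars. Put $K=\overline{\co}\{x_n\colon n\in\N\}$, a nonempty closed convex bounded subset of $C$. The coordinate functionals $\sum a_ix_i\mapsto a_j$ on $\spn\{x_n\}$ are $p$-Lipschitz by the above inequality, hence extend to continuous linear functionals; using that $\ell_1$ is complete (so that the coordinate vectors of a Cauchy net from $\co\{x_n\}$ converge in $\ell_1$ and retain $\ell_1$-norm $1$), these functionals assemble into an embedding $\Psi$ of $\overline{\spn}\{x_n\}$ onto a subspace of $\ell_1$ containing $\ell_1^0$, which carries $K$ onto a convex bounded set $\Omega\subset\ell_1$ containing every finitely supported probability vector; in particular $\Omega$ contains a $2$-separated sequence and is not totally bounded, and $\Psi|_K$ is an affine homeomorphism onto $\Omega$. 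By the Lin--Sternfeld theorem there is a continuous $g\colon\Omega\to\Omega$ with $\delta:=\inf_{y\in\Omega}\|y-g(y)\|_{\ell_1}>0$. Set $f=(\Psi|_K)^{-1}\circ g\circ(\Psi|_K)\colon K\to K$, a continuous self-map of $K$. If some sequence $u_n\in K$ satisfied $u_n-f(u_n)\to 0$ in $\sigma(X,X^*)$, then, since a functional on $\overline{\spn}\{x_n\}$ extends to $X^*$ by Hahn--Banach and $\Psi$ (being a continuous linear homeomorphism onto its image) is weak-to-weak continuous, we would get $\Psi(u_n)-g(\Psi(u_n))\to 0$ weakly in $\ell_1$, whence $\|\Psi(u_n)-g(\Psi(u_n))\|_{\ell_1}\to 0$ by the Schur property of $\ell_1$, contradicting the choice of $\delta$. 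Thus $f$ has no weakly null approximate fixed point sequence, so $K$ fails the weak-AFPP and $(1)$ fails.

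The routine points -- continuity of $\Psi$, of its inverse and of $g$, and the fact that the seminorm estimates can be pulled back because $(x_n)$ is bounded -- are immediate from Proposition~\ref{prop:2sec3}. The delicate step in $(1)\Rightarrow(2)$ is the construction of the homeomorphism $\Psi$ and the correct identification of $\Omega$ when $X$ is not sequentially complete: one must check that the coordinatewise image of each point of $K$ actually lies in $\ell_1$ with $\ell_1$-norm $1$ (this uses completeness of $\ell_1$, not of $X$), that the two topologies agree on $\spn\{x_n\}$ and pass to the closure, and that the Lin--Sternfeld map can be arranged on the possibly non-closed convex set $\Omega$. On the $(2)\Rightarrow(1)$ side the substantive content is entirely carried by Lemma~\ref{lem:FSWAFPP}; everything else there is just the transfer via Brouwer's theorem already packaged in Theorem~\ref{prop:1sec2}.
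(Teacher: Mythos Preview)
Your $(1)\Rightarrow(2)$ is essentially the paper's argument, though the paper packages it more cleanly: rather than assembling coordinate functionals into $\Psi$ and then pulling back a specific Lin--Sternfeld map, the paper simply observes that the inverse $S_0=T_0^{-1}:T_0(\ell_1^0)\to\ell_1^0$ sends Cauchy sequences to Cauchy sequences and hence extends to a continuous linear isomorphism $S:Y\to S(Y)\subset\ell_1$ (here $Y=\overline{\spn}\{x_n\}$). One then argues by contradiction: if $D=\overline{\co}\{x_n\}$ had the weak-AFPP, so would $S(D)$ (since $S$ is a weak-to-weak homeomorphism), hence by Schur $S(D)$ would have the norm-AFPP, hence by Lin--Sternfeld $S(D)$ would be totally bounded --- impossible since $S(D)$ contains the canonical basis. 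This sidesteps all of the ``delicate'' issues you flag about $\Omega$ being non-closed and about transferring the explicit map.

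Your $(2)\Rightarrow(1)$ has a genuine gap. Lemma~\ref{lem:FSWAFPP} as stated requires the set to be \emph{separable}; its proof uses separability to make the polars $B_n$ weak$^*$-metrizable so that the Bourgain--Fremlin--Talagrand theorem applies on a Polish space. You invoke the lemma directly on $C$, which need not be separable. The paper handles this (in Proposition~\ref{prop:3sec3}) by first constructing, for a given continuous $f:K\to K$, a nonempty \emph{separable} convex $D\subset K$ with $f(D)\subset\overline{D}$, and only then applying Theorem~\ref{prop:1sec2}(i) and Lemma~\ref{lem:FSWAFPP} to $D$. The construction of $D$ is routine (iterate $D_{n+1}=\co(D_n\cup T_n)$ with $T_n$ a countable set approximating $f(D_n)$), but it is not optional: without it you are appealing to a Fr\'echet--Urysohn statement for non-separable $C$ that the lemma does not provide. (The non-separable version is in fact true --- see Proposition~\ref{prop:FU} --- but it requires an additional countable-tightness argument via Arkhangel'skii--Pytkeev, which you do not supply and which is not needed for the theorem at hand.)
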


As an immediate consequence we get the following corollary.

\begin{cor}\label{cor:1sec3}
Let $X$ be a metrizable LCS not containing any $\ell\sb 1$-sequence. Then $X$ has the weak-AFPP.
\end{cor}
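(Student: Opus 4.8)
The statement to prove is Corollary~\ref{cor:1sec3}: a metrizable LCS $X$ with no $\ell_1$-sequence has the weak-AFPP. The plan is to deduce this directly from Theorem~\ref{thm:FSWAFPP} by a routine reduction to the case of a bounded set. Recall that $X$ has the weak-AFPP means every bounded closed convex subset $C$ of $X$ has the $\sigma(X,X^*)$-AFPP. So fix such a $C$; I must show every continuous $f\colon C\to C$ has a weak approximate fixed point sequence. The key observation is that $C$, being a subset of $X$, contains no $\ell_1$-sequence either (an $\ell_1$-sequence in $C$ would be one in $X$). Hence by Theorem~\ref{thm:FSWAFPP}, applied with this $C$, assertion (2) holds and therefore assertion (1) holds: \emph{every} nonempty closed convex subset of $C$ has the weak-AFPP. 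In particular $C$ itself (a closed convex subset of itself) has the weak-AFPP, so $f$ admits a weak approximate fixed point sequence.

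One small point to check is that Theorem~\ref{thm:FSWAFPP} requires $C$ to be a nonempty closed convex bounded subset of a metrizable LCS, and all of these hypotheses are inherited: $X$ is metrizable by assumption, and $C$ is nonempty (we may assume the ambient bounded closed convex sets are nonempty, the empty set trivially having the property), closed, convex, and bounded by the definition of the weak-AFPP. So the hypotheses of Theorem~\ref{thm:FSWAFPP} are met with no extra work, and the implication (2)$\Rightarrow$(1) of that theorem gives the conclusion immediately. There is essentially no obstacle here; the entire content has already been packaged into Theorem~\ref{thm:FSWAFPP}, and the corollary is just the observation that ``$C$ has no $\ell_1$-sequence'' follows from ``$X$ has no $\ell_1$-sequence'' for any $C\subseteq X$, combined with specializing the quantifier ``each closed convex subset of $C$'' to $C$ itself.

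If one wanted to be slightly more self-contained, one could also note the alternative phrasing: the map $T_0\colon \ell_1^0\to X$ in Definition~\ref{def:1sec3} restricted to a sequence lying in $C$ is the same whether we view its range inside $C$, inside the closed linear span of $C$, or inside $X$, so the notion of an $\ell_1$-sequence is unambiguous and monotone with respect to passing to subsets. Thus the hypothesis ``$X$ contains no $\ell_1$-sequence'' is the strongest possible such hypothesis and trivially passes to every subset. This makes the deduction from Theorem~\ref{thm:FSWAFPP} purely formal.
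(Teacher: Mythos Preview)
Your proposal is correct and matches the paper's approach exactly: the paper simply states that the corollary is ``an immediate consequence'' of Theorem~\ref{thm:FSWAFPP} and gives no separate proof, and what you have written is precisely the routine unpacking of that immediate consequence via the implication (2)$\Rightarrow$(1).
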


\begin{proof}[Proof of the implication (1)$\Rightarrow$(2) of Theorem~\ref{thm:FSWAFPP}]
Let us suppose by contradiction that (2) does not hold. Fix an $\ell_1$-sequence $(x_n)$ in $C$, and denote by $D$ the closed convex hull and by $Y$ the closed linear span of the set $\{x_n:n\in\mathbb{N}\}$. Let $T_0:\ell_1^0\to X$ be defined by \eqref{eq-T0}. By our assumption $T_0$ is an isomorphism of $\ell_1^0$ onto $T_0(\ell_1^0)$. Denote by $S_0$ its inverse. Then $S_0$ is an isomorphism of $T_0(\ell_1^0)$ onto $\ell_1^0$. In particular, $S_0$ maps Cauchy sequences to Cauchy sequences. Thus $S_0$ can be uniquely extended to a continuous linear mapping $S:\overline{T_0(\ell_1^0)}\to\ell_1$. Note that $\overline{T_0(\ell_1^0)}=Y$ and that $S$ is an isomorphism of $Y$ onto $S(Y)\subset \ell_1$. As $S$ is linear, it is also a weak-to-weak homeomorphism.

We claim that the set $D$ does not have the weak-AFPP. Suppose on the contrary that it has the weak-AFPP. Then $S(D)$ has the weak-AFPP as well. But then, by Schur's theorem, $S(D)$ has the AFPP. By \cite[Theorem 1.1]{Lin-Sternfeld} we get that $S(D)$ is totally bounded. But it cannot be the case as $S(D)$ contains the canonical basis of $\ell_1$. This completes the proof.
\end{proof}


To prove the converse implication we need some more results. The following theorem is a variant of Rosenthal's $\ell_1$-theorem. Its proof is a slight refinement of the proof of \cite[Lemma 3]{Diaz}.

\begin{thm}\label{thm:1sec3} Let $X$ be a metrizable LCS. Then each bounded sequence in $X$ contains either a weakly Cauchy subsequence or a subsequence which is an $\ell\sb 1$-sequence.
\end{thm}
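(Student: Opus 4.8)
The plan is to reduce the metrizable LCS case to the classical Banach-space Rosenthal $\ell_1$-theorem by passing to a suitable auxiliary normed space attached to a countable determining family of seminorms. Since $X$ is a metrizable LCS, its topology is generated by a countable increasing family of continuous seminorms $p_1\le p_2\le\cdots$. Let $(x_n)$ be a bounded sequence in $X$; boundedness means $\sup_n p_k(x_n)=:M_k<\infty$ for every $k$. The first step is to fix $k$ and consider the seminorm $p_k$: on the quotient $X/p_k^{-1}(0)$ it induces a norm, and completing gives a Banach space $X_k$ together with a continuous linear map $\pi_k\colon X\to X_k$. The images $(\pi_k(x_n))_n$ form a bounded sequence in the Banach space $X_k$, so by Rosenthal's classical $\ell_1$-theorem it has a subsequence that is either weakly Cauchy in $X_k$ or equivalent to the unit vector basis of $\ell_1$.

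The second step is a diagonal argument across $k$. Starting from $(x_n)$, extract a subsequence that is ``Rosenthal-good'' for $p_1$, then a further subsequence good for $p_2$, and so on; the diagonal subsequence $(x_{n_j})$ is then good for every $p_k$ simultaneously. Now I split into two cases according to whether, for \emph{some} $k$, the chosen subsequence yields an $\ell_1$-sequence in $X_k$. If so, then there is a constant $c>0$ with $\|\sum_i a_i\pi_k(x_{n_i})\|_{X_k}\ge c\sum_i|a_i|$; since $p_k\ge\|\pi_k(\cdot)\|_{X_k}$ by construction (indeed $p_k(x)$ dominates the $X_k$-norm of $\pi_k(x)$), we get $p_k(\sum_i a_i x_{n_i})\ge c\sum_i|a_i|$, so after rescaling $(x_{n_i})$ satisfies condition (i) of Proposition~\ref{prop:2sec3}, hence is an $\ell_1$-sequence in $X$. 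The remaining case is that for \emph{every} $k$ the subsequence $(\pi_k(x_{n_j}))_j$ is weakly Cauchy in $X_k$. I must then show $(x_{n_j})$ is weakly Cauchy in $X$. For this, take any $x^*\in X^*$; by continuity there are $k$ and $\lambda>0$ with $|x^*(x)|\le\lambda p_k(x)$ for all $x$, so $x^*$ factors through $\pi_k$ as $x^* = \widetilde{x^*}\circ\pi_k$ for some $\widetilde{x^*}\in X_k^*$; weak convergence of the partial differences in $X_k$ then gives that $(x^*(x_{n_j}))_j$ is Cauchy in $\mathbb{R}$. Since $x^*\in X^*$ was arbitrary, $(x_{n_j})$ is weakly Cauchy in $X$.

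The main obstacle I anticipate is the bookkeeping in the dichotomy: one must be careful that the seminorm-witness obtained from an $\ell_1$-basis in $X_k$ really is a \emph{continuous} seminorm on $X$ (this is why working with the given generating family $p_k$, rather than arbitrary seminorms, matters) and that the factorization $x^*=\widetilde{x^*}\circ\pi_k$ is legitimate on the \emph{completion} $X_k$ — here one uses that $x^*$ vanishes on $p_k^{-1}(0)$ and is $p_k$-Lipschitz, hence extends continuously to $X_k$. A secondary point requiring a line of care is that ``weakly Cauchy in each $X_k$'' must be read with a fixed subsequence after diagonalization, so that the single sequence $(x_{n_j})$ works for all $k$ at once; this is exactly what the diagonal extraction delivers. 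Everything else — boundedness bounds $M_k$, monotonicity of the $p_k$, the elementary estimate $p_k\ge\|\pi_k(\cdot)\|_{X_k}$ — is routine.
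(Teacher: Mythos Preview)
Your proof is correct and follows essentially the same approach as the paper: both arguments use a countable generating family of seminorms, apply Rosenthal's $\ell_1$-theorem level by level, and diagonalize, with the $\ell_1$ case lifting back to $X$ via Proposition~\ref{prop:2sec3} and the weakly Cauchy case following because every $x^*\in X^*$ is dominated by some $p_k$. The only cosmetic difference is packaging---the paper views elements of $X$ as functions on the polars $B_n=U_n^0$ and invokes Rosenthal's theorem in its ``functions on a set'' form, whereas you pass to the quotient Banach spaces $X_k$; these are two ways of saying the same thing, since the sup-norm on $B_n$ coincides with $p_n$ by bipolarity.
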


\begin{proof} Let $(\|\cdot\|\sb n)$ be a sequence of seminorms generating the topology of $X$. Without of loss of generality we may assume that $\|x\|_n\leq \|x\|_{n+1}$ for all $n$ and $x\in X$ (cf. \cite[page 205]{kothe}). Let $U_n=\{ x\colon \|x\|\sb n<1\}$ and let $B_n=U_n^0$ be the polar of $U_n$. Assume that $(x_k)$ is a bounded sequence in $X$ such that no its subsequence is an $\ell_1$-sequence. For $n=0,1,2,\dots$ we construct a sequence $(x_k^n)$ inductively as follows. Set $x_k^0=x_k$ for all $k\in \mathbb{N}$. Assume that for a given $n\in\mathbb{N}$ the sequence $(x_k^{n-1})$ has been defined. By Rosenthal's theorem \cite[Theorem 1]{rosenthal} one of the following possibilities takes place (elements of $X$ are viewed as functions on $B_n$):
\begin{itemize}
	\item[(i)] $(x_k^{n-1})$ has a subsequence which is equivalent to the $\ell_1$-basis on $B_n$.
	\item[(ii)] $(x_k^{n-1})$ has a subsequence which pointwise converges on $B_n$.
\end{itemize}
Let us show that the case (i) cannot occur. Indeed, suppose that (i) holds. Let
$(y_k)$ be the respective subsequence. The equivalence to the $\ell_1$ basis on $B_n$ means that there is some $C>0$ such that
$$\left\|\sum_{i=1}^k a_i y_i\right\|_n\ge C\sum_{i=1}^k|a_i|$$
for each $k\in\mathbb{N}$ and each choice $a_1,\dots,a_k\in\mathbb{R}$. By Proposition~\ref{prop:2sec3} $(y_k)$ is an $\ell_1$-sequence in $X$, which is a contradiction.

Thus the possibility (ii) takes place. Denote by $(x_k^n)$ the respective  subsequence. This completes the inductive construction.

Take the diagonal sequence $(x_k^k)$. It is a subsequence of $(x_k)$ which pointwise converges on $B_n$ for each $n\in\mathbb{N}$. Moreover, if $x^*\in X^*$ is arbitrary, then there is $n$ and $c>0$ such that $cx^*\in B\sb n$. In particular, the linear span of the union of all $B\sb n's$ is the whole dual $X^*$. It follows that the sequence $(x_k^k)$ is weakly Cauchy. The proof is complete.
\end{proof}

We continue by the following lemma which generalizes \cite[Proposition 2.3]{Kalenda}.

\begin{lem}\label{lem:FSWAFPP} Let $X$ be a metrizable LCS and $C\subset X$ a separable bounded set containing no $\ell_1$-sequence. Then the weak closure of the set $C-C=\{x-y: x,y\in C\}$ is Fr\'echet-Urysohn with regarding the weak topology.
\end{lem}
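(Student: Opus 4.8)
The plan is to mimic the proof of \cite[Proposition 2.3]{Kalenda}, replacing the Banach-space scaffolding used there by the countable family of seminorms available in a metrizable LCS, and then invoking the Bourgain--Fremlin--Talagrand theorem \cite[Theorem 3F]{BFT} exactly as in \cite{Kalenda}. First I would reduce to the case that $X$ itself is separable: the closed linear span $Y$ of the separable set $C$ is a separable metrizable LCS, it is weakly closed (being closed and convex), so $\overline{C-C}^{\,w}\subseteq Y$, and by Hahn--Banach the weak topology of $X$ induces on $Y$ the weak topology of $Y$. Hence we may and do assume $X=Y$ is separable.

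Next I would embed $(X,w)$ as a topological subspace of $\mathbb{R}^P$ for a suitable Polish space $P$, in such a way that every vector of $X$ becomes a continuous function on $P$ while $C-C$ becomes uniformly bounded. Pick a nondecreasing sequence of seminorms $(\|\cdot\|_n)$ generating the topology of $X$ (as in the proof of Theorem~\ref{thm:1sec3}), put $U_n=\{x:\|x\|_n<1\}$ and let $B_n=U_n^{0}$ be its polar in $X^*$. Each $B_n$ is weak*-compact by the Banach--Alaoglu theorem and weak*-metrizable because $X$ is separable, so each $(B_n,w^*)$ is a compact metrizable space. Set $c_n=1+\sup\{\|z\|_n:z\in C-C\}<\infty$ (finite since $C-C$ is bounded) and let $P=\bigsqcup_{n\ge1}\tfrac1{c_n}B_n$, a countable topological sum of compact metrizable spaces, hence Polish. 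Let $\iota\colon X\to\mathbb{R}^P$ be the evaluation map $\iota(x)(p)=p(x)$. Since $\bigcup_nB_n$ spans $X^*$ (this is shown inside the proof of Theorem~\ref{thm:1sec3}) and $X^*$ separates points of $X$, the map $\iota$ is an injective topological embedding for the weak topology; each $\iota(x)$ is $w^*$-continuous on every piece $\tfrac1{c_n}B_n$, hence continuous on $P$; and $|\iota(z)(p)|=\tfrac1{c_n}|b(z)|\le\tfrac1{c_n}\|z\|_n<1$ whenever $z\in C-C$ and $p=\tfrac1{c_n}b$ with $b\in B_n$, so $\iota(C-C)$ is bounded by $1$ on $P$.

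The crux is to show that $K:=\overline{\iota(C-C)}^{\,p}$, the closure in $\mathbb{R}^P$, is a compact subset of the space $\mathfrak B_1(P)$ of Baire-one functions on $P$. Compactness of $K$ in $\mathbb{R}^P$ is clear since $\iota(C-C)$ is uniformly bounded. For the rest: by Theorem~\ref{thm:1sec3}, since $C$ contains no $\ell_1$-sequence, every sequence in $C$ has a weakly Cauchy subsequence, and a double extraction shows the same for $C-C$; transported by $\iota$ this says that every sequence in $\iota(C-C)$ has a subsequence converging pointwise on $P$, and its limit, being a pointwise limit of the continuous functions $\iota(z_k)$, lies in $\mathfrak B_1(P)$. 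Thus $\iota(C-C)$ is relatively countably compact in $\mathfrak B_1(P)$. Because $P$ is Polish, $\mathfrak B_1(P)$ is angelic by \cite[Theorem 3F]{BFT} (i.e. every relatively countably compact subset is relatively compact, with every point of its closure the limit of a sequence from the subset); hence $K\subseteq\mathfrak B_1(P)$, and a compact subset of an angelic space is Fr\'echet--Urysohn, so $K$ is Fr\'echet--Urysohn. Finally $\iota$ restricts to a homeomorphism of $(\overline{C-C}^{\,w},w)$ onto the subspace $K\cap\iota(X)$ of $K$; since the Fr\'echet--Urysohn property passes to subspaces, $\overline{C-C}^{\,w}$ is Fr\'echet--Urysohn, which is the assertion.

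The main obstacle is precisely the step $K\subseteq\mathfrak B_1(P)$, where the hypothesis ``$C$ contains no $\ell_1$-sequence'' really enters: one needs the metrizable-LCS form of Rosenthal's theorem (Theorem~\ref{thm:1sec3}) to turn the absence of $\ell_1$-sequences into the relative countable compactness of $\iota(C-C)$ in $\mathfrak B_1(P)$, after which the deep input \cite[Theorem 3F]{BFT} both upgrades this to relative compactness and delivers the Fr\'echet--Urysohn conclusion. The remaining ingredients---the reduction to a separable space and the construction of the Polish ``coordinate space'' $P$ out of the defining seminorms and their polars---are routine adaptations of the Banach-space argument of \cite{Kalenda}.
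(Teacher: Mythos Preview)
Your proposal is correct and follows essentially the same route as the paper's proof: reduce to a separable $X$, build a Polish space $P$ from the topological sum of the (metrizable, weak*-compact) polars $B_n$ of a generating sequence of seminorms, embed $(X,w)$ into $\mathbb{R}^P$ by evaluation, use Theorem~\ref{thm:1sec3} together with a double extraction to get relative countable compactness of the image of $C-C$ in $B_1(P)$, and finish with \cite[Theorem~3F]{BFT}. The only cosmetic differences are that you rescale the polars by $1/c_n$ to force uniform boundedness (the paper does not bother, since \cite[Theorem~3F]{BFT} applies directly to relatively countably compact subsets of $B_1(P)$ without any boundedness hypothesis) and that you phrase the BFT conclusion via angelicity rather than quoting ``compact and Fr\'echet--Urysohn'' directly; neither affects the substance.
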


\begin{proof} As the closed linear span of $C$ is separable, we can without loss of generality suppose that $X$ is separable.
Let $(\|\cdot\|\sb n)$, $U_n$ and $B_n$ ($n\in\mathbb{N})$ be as in the proof of Theorem \ref{thm:1sec3}.
Notice that $B\sb n$ is a metrizable weak$^*$ compact subset of $X^*$. Moreover, the linear span of the union of all $B\sb n's$ is the whole dual $X^*$ (see the end of the proof of Theorem~\ref{thm:1sec3}). Let now $P$ be the topological sum of the spaces $(B\sb n, w^*)$, $n\in\mathbb{N}$. Then $P$ is a Polish space. Denote by $G\colon P\to X^*$ the canonical mapping of $P$ onto the union of all $B\sb n's$. Then $G$ is continuous from $P$ to $(X^*,w^*)$. Define a mapping $H\colon X\to \mathbb{R}^P$ by the formula $H(x)(p)=G(p)(x)$. Then $H$ is a homeomorphism of $(X,w)$ onto $H(X)$ equipped with the pointwise convergence topology. Moreover, the functions from $H(X)$ are continuous on $P$.

Let $A=H(C-C)$. We claim that each sequence from $A$ has a pointwise convergent subsequence. To show that it is enough to observe that each sequence in $C-C$ has weakly Cauchy subsequence. Indeed, let $(z_n)$ be a sequence in $C-C$. Then $z_n=x_n-y_n$ for some $x_n,y_n\in C$. As $C$ contains no $\ell_1$-sequence, by Theorem~\ref{thm:1sec3} we get a weakly Cauchy subsequence $(x_{n_k})$ of $(x_k)$. Applying Theorem~\ref{thm:1sec3} once more we get a weakly Cauchy subsequence $(y_{n_{k_l}})$ of $(y_{n_k})$. Then $(z_{n_{k_l}})$ is a weakly Cauchy subsequence of $(z_{n})$.

 Thus $A$ is relatively countably compact in $B_1(P)$, which is the space of all Baire-one functions on $P$ equipped with the topology of pointwise convergence. By the Theorem 3F of \cite{BFT}, the closure of $A$ in $B\sb 1(P)$ is compact and Fr\'echet-Urysohn. In particular, the weak closure of $C-C$ in $X$ is Fr\'echet-Urysohn in the weak topology. The proof is complete.
\end{proof}

The implication $(2)\Rightarrow(1)$ of Theorem \ref{thm:FSWAFPP} follows immediately from the following proposition.

\begin{pro}\label{prop:3sec3} Let $(X,\tau)$ be a metrizable LCS, $C\subset X$ a nonempty convex bounded set which does not contains any $\ell_1$-sequence and $f:C\to\overline{C}$ a $\tau$-to-weak continuous mapping. Then there is a sequence $(x_n)$ in $C$ such that $x_n-f(x_n)$ weakly converge to $0$.
\end{pro}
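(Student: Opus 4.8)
The plan is to compress the problem onto a separable, essentially $f$‑invariant convex subset of $C$, apply Theorem~\ref{prop:1sec2}(i) there to place $0$ in the weak closure of the displacement set $\{x-f(x)\}$, and then use the Fr\'echet--Urysohn property from Lemma~\ref{lem:FSWAFPP} to turn that closure statement into an actual sequence.

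First I would build the separable piece. Fix $x_0\in C$ and a metric $d$ inducing $\tau$. Inductively construct countable sets $A_0\subseteq A_1\subseteq\cdots$ contained in $C$: let $A_0=\{x_0\}$; given a countable $A_n\subseteq C$, let $B_n$ be the (countable) set of all rational convex combinations of points of $A_n$, which lies in $C$ by convexity, and for each $y\in B_n$ choose, using $f(y)\in\overline C$ and metrizability, a sequence $(c^y_k)_k\subseteq C$ with $d(c^y_k,f(y))\to 0$; put $A_{n+1}=B_n\cup\{c^y_k:y\in B_n,\ k\in\mathbb N\}$. Set $E=\bigcup_n B_n$ --- a countable subset of $C$ closed under rational convex combinations --- and $C_0=\overline E^{\tau}$. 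Then $C_0$ is a $\tau$‑separable, bounded, $\tau$‑closed (hence, being convex, weakly closed) convex subset of $\overline C$. By construction $f(E)\subseteq C_0$ (for $y\in B_n$ we have $f(y)=\lim_k c^y_k$ with $c^y_k\in A_{n+1}\subseteq E$), and from this $f(C_0\cap C)\subseteq C_0$: given $x\in C_0\cap C$, pick $x_j\in E$ with $x_j\to x$ in $\tau$; then $f(x_j)\in C_0$ for each $j$, and since $f$ is $\tau$‑to‑weak continuous while $C_0$ is weakly closed, $f(x)$ is the weak limit of $(f(x_j))_j$ and so lies in $C_0$.

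Now I would set $K_0=C_0\cap C$. This is a nonempty ($x_0\in K_0$) bounded convex set with $\overline{K_0}^{\tau}=C_0$ (because $E\subseteq K_0\subseteq C_0$), and $f$ restricts to a $\tau$‑to‑weak continuous map of $K_0$ into $\overline{K_0}^{\tau}$. Theorem~\ref{prop:1sec2}(i), applied with $Z=X^*$, then yields $0\in\overline{\{x-f(x):x\in K_0\}}^{w}$. Moreover $C_0$ contains no $\ell_1$‑sequence: given a sequence $(y_n)$ in $C_0$, pick $e_n\in E\subseteq C$ with $d(e_n,y_n)<1/n$; by Theorem~\ref{thm:1sec3} (and since $C$, hence $E$, has no $\ell_1$‑sequence) $(e_n)$ has a weakly Cauchy subsequence, and adding back the differences $y_n-e_n$, which tend to $0$ in $\tau$ and hence weakly, gives a weakly Cauchy subsequence of $(y_n)$; since an $\ell_1$‑sequence admits no weakly Cauchy subsequence (a summing‑type functional on its closed linear span extends to $X^*$ by Hahn--Banach), $C_0$ contains none. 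Hence Lemma~\ref{lem:FSWAFPP} applies to the separable bounded set $C_0$, so the weak closure of $C_0-C_0$ is Fr\'echet--Urysohn for the weak topology. Finally, $\{x-f(x):x\in K_0\}\subseteq C_0-C_0$ (since $x\in C_0$ and $f(x)\in C_0$), and $0$ lies in its weak closure; the Fr\'echet--Urysohn property then produces points $x_n\in K_0\subseteq C$ with $x_n-f(x_n)\to 0$ weakly, as required.

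The one genuinely delicate point --- and the reason for the somewhat fussy first step --- is that $f$ is defined only on $C$, not on $\overline C$, so the usual ``pass to a separable closed convex $f$‑invariant subset'' trick is not available verbatim: one has to manufacture the countable set $E$ so that $C_0=\overline E$ gets absorbed into itself by $f$ along $C_0\cap C$, and then check that $C_0\cap C$ is a legitimate domain for Theorem~\ref{prop:1sec2}(i) with codomain its own closure $C_0$. The remaining ingredients --- weak closedness of $\tau$‑closed convex sets, the separability bookkeeping, and the inheritance of $\ell_1$‑freeness from $C$ to $C_0$ via Rosenthal's theorem (Theorem~\ref{thm:1sec3}) --- are routine.
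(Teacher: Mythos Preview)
Your proof is correct and follows the same three-step skeleton as the paper: reduce to a separable piece, apply Theorem~\ref{prop:1sec2}(i) to put $0$ in the weak closure of the displacement set, then invoke Lemma~\ref{lem:FSWAFPP} for the Fr\'echet--Urysohn extraction.

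The only substantive difference is in the separability reduction. The paper builds a separable convex $D\subset C$ (not closed) with $f(D)\subset\overline{D}$ directly, using the fact that weakly separable sets in a metrizable LCS are $\tau$-separable to handle the image $f(D_n)$ at each stage; since $D$ stays inside $C$, Lemma~\ref{lem:FSWAFPP} applies to $D$ with no further work. You instead build a closed $C_0\subset\overline{C}$ and work on $K_0=C_0\cap C$, which forces the extra verification that $C_0$ itself is $\ell_1$-free (your perturbation argument via Theorem~\ref{thm:1sec3} handles this correctly). Your route is a bit more laborious but entirely sound; the paper's version is slightly more economical precisely because it keeps the separable set inside $C$ and so inherits $\ell_1$-freeness for free.
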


\begin{proof} First let us find a nonempty separable convex $D\subset C$ with $f(D)\subset\overline{D}$. To do that fix $x_0\in C$ and set $D_0=\{x_0\}$.
Suppose that $D_n\subset C$ is a nonempty separable convex set. Then $f(D_n)$ is a weakly separable subset of $\overline C$. As weakly separable sets are separable, we can find $S_n\subset f(D_n)$ a countable $\tau$-dense set.
As $\tau$ is metrizable and $S_n\subset \overline{C}$, there is a countable set
$T_n\subset C$ with $S_n\subset\overline{T_n}$. Set $D_{n+1}=\co(D_n\cup T_n)$.
Then $D_{n+1}$ is a separable convex subset of $C$ containing $D_n$.
Finally, set $D=\bigcup_{n=0}^\infty D_n$. Then $D$ is a nonempty convex separable subset of $C$ and, moreover,
$$f(D)=\bigcup_{n=0}^\infty f(D_n)\subset\bigcup_{n=0}^\infty \overline{S_n}
\subset\bigcup_{n=0}^\infty \overline{T_n}
\subset\bigcup_{n=0}^\infty \overline{D_{n+1}}\subset\overline{D}.$$
From Theorem~\ref{prop:1sec2} we get $0\in\overline{\{x-f(x)\colon x\in D\}}^{w}$, where $w=\sigma(X,X^*)$. Thus, according to Lemma \ref{lem:FSWAFPP}, there exists a sequence $(x\sb n)$ in $D$ so that $x\sb n - f(x\sb n)\to 0$ in the weak topology. This proves the result.
\end{proof}

\section{Weak-AFPP in non-metrizable LCS}\label{sec:4}

In the previous section we have proved that the results of \cite{Kalenda} can be extended to the framework of metrizable LCS. But if we compare these results to the completely abstract results of Section~\ref{sec:2}, it is natural to ask whether the metrizability assumption is necessary. Let us remark that this assumption was really used several times. In the proof of the implication $(1)\Rightarrow(2)$ of Theorem~\ref{thm:FSWAFPP} we used the Fr\'echet-Urysohn property of the topology of $X$ to extend the mapping $S_0$ to $S$. In the proof of the converse implication we used metrizability twice by assuming that the topology is generated by a sequence of seminorms
-- once in the proof of Theorem~\ref{thm:1sec3} and once in the proof of the implication itself.

In the proof of the implication $(1)\Rightarrow(2)$ of Theorem~\ref{thm:FSWAFPP}
the metrizability assumption can be avoided by noticing that $S_0$ maps Cauchy nets to Cauchy nets. That the metrizability is essential for the converse implication and for Rosenthal's theorem it witnessed by the following example.

Before stating the example we will prove the following lemma which we will use in the proof of the Example~\ref{ex:1sec4} and Theorem~\ref{thm:nonmetrizable} below.
We think that this lemma should be essentially well known, but we do not know any reference. So, we give a proof.  

\begin{lem}\label{positivecone} Let $\Gamma$ be an arbitrary set. Then the norm and weak topologies coincide on the positive cone of $\ell_1(\Gamma)$.
\end{lem}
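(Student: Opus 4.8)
The plan is to show that on the positive cone $P=\{x\in\ell_1(\Gamma):x(\gamma)\ge0\text{ for all }\gamma\in\Gamma\}$, weak convergence of a net forces norm convergence; since the weak topology is always coarser than the norm topology, this gives equality of the two topologies on $P$. The key observation is that the functional $\gamma\mapsto x(\gamma)$ is $w^*$-continuous on $\ell_1(\Gamma)$ and in particular weakly continuous (each coordinate functional $e_\gamma^*$ lies in $\ell_\infty(\Gamma)=\ell_1(\Gamma)^*$), and that the norm on the positive cone is itself given by a single functional: for $x\in P$ we have $\|x\|_1=\sum_{\gamma}x(\gamma)=\langle x,\mathbf 1\rangle$, where $\mathbf 1\in\ell_\infty(\Gamma)$ is the constant-$1$ function. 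So the norm restricted to $P$ is weakly continuous.

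First I would take a net $(x_i)$ in $P$ converging weakly to $x\in P$ (note $P$ is weakly closed, being convex and norm-closed, so the limit stays in $P$). Testing against $\mathbf 1\in\ell_\infty(\Gamma)$ gives $\|x_i\|_1=\langle x_i,\mathbf 1\rangle\to\langle x,\mathbf 1\rangle=\|x\|_1$. Next I would test against each coordinate functional $e_\gamma^*$ to get $x_i(\gamma)\to x(\gamma)$ for every $\gamma$. Now I estimate $\|x_i-x\|_1$ directly. Given $\e>0$, pick a finite set $F\subset\Gamma$ with $\sum_{\gamma\notin F}x(\gamma)<\e$. Then
\[
\|x_i-x\|_1=\sum_{\gamma\in F}|x_i(\gamma)-x(\gamma)|+\sum_{\gamma\notin F}|x_i(\gamma)-x(\gamma)|.
\]
The first (finite) sum tends to $0$ by coordinatewise convergence. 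For the tail, using positivity, $\sum_{\gamma\notin F}|x_i(\gamma)-x(\gamma)|\le\sum_{\gamma\notin F}x_i(\gamma)+\sum_{\gamma\notin F}x(\gamma)$; the second term is $<\e$, and the first equals $\|x_i\|_1-\sum_{\gamma\in F}x_i(\gamma)$, which converges to $\|x\|_1-\sum_{\gamma\in F}x(\gamma)=\sum_{\gamma\notin F}x(\gamma)<\e$. Hence $\limsup_i\|x_i-x\|_1\le2\e$, and since $\e$ was arbitrary, $x_i\to x$ in norm.

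The only genuine subtlety — and the step I expect to require the most care — is that we are working with nets, not sequences (the space $\ell_1(\Gamma)$ need not be weakly sequentially anything interesting for uncountable $\Gamma$, but that is irrelevant here since the estimate above works verbatim for nets). One must be slightly careful that "the first finite sum tends to $0$" and "$\|x_i\|_1-\sum_{\gamma\in F}x_i(\gamma)$ converges" are legitimate net statements: they are, since a finite sum of convergent nets converges to the sum of the limits. With that understood, the argument is complete; alternatively one could phrase it as: the identity map from $(P,w)$ to $(P,\|\cdot\|)$ is continuous because $\|\cdot\|_1|_P=\langle\cdot,\mathbf 1\rangle$ together with the coordinate functionals generate the norm topology on $P$ in a way compatible with the weak topology, but the explicit net estimate is cleaner to write down.
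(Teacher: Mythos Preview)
Your proof is correct and follows essentially the same approach as the paper's. The paper argues via neighborhoods (showing the identity $(C,w)\to(C,\|\cdot\|)$ is continuous by exhibiting, for given $x$ and $\varepsilon$, a weak neighborhood using the coordinate functionals on a finite set $F$ and the functional $\chi_{\Gamma\setminus F}\in\ell_\infty(\Gamma)$), while you phrase the same estimate in terms of nets using $\mathbf 1$ and the coordinate functionals; the underlying computation---splitting into a finite part and a tail, and bounding the tail via positivity by $\sum_{\gamma\notin F}x_i(\gamma)+\sum_{\gamma\notin F}x(\gamma)$---is identical.
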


\begin{proof} Denote by $C$ the positive cone of $\ell_1(\Gamma)$.
Since the weak topology is weaker than the norm one, it is enough to prove that the identity 
of $(C,w)$ onto $(C,\|\cdot\|)$ is continuous.

Let $x\in C$ and $\varepsilon>0$ be arbitrary. Fix a nonempty finite set $F\subset \Gamma$ such that
$$\sum_{\gamma\in F} x(\gamma)>\|x\|-\frac\varepsilon4.$$
Set
$$\begin{aligned}
U&=\left\{y\in C: |y(\gamma)-x(\gamma)|<\frac \varepsilon{4|F|}\mbox{ for }\gamma\in F\right\}, \\
V&=\left\{y\in C: \sum_{\gamma\in\Gamma\setminus F} y(\gamma)-\sum_{\gamma\in\Gamma\setminus F} x(\gamma)<\frac\varepsilon4\right\}.\end{aligned}$$
Then both $U$ and $V$ are weak neighborhoods of $x$ in $C$ (recall that the dual of $\ell_1(\Gamma)$ is represented by $\ell_\infty(\Gamma)$), hence so is $U\cap V$. Moreover, if $y\in U\cap V$, then
$$\begin{aligned}
\|y-x\| & = \sum_{\gamma\in F} |y(\gamma)-x(\gamma)| + \sum_{\gamma\in\Gamma\setminus F} |y(\gamma)-x(\gamma)| 
< \frac \varepsilon4 + \sum_{\gamma\in\Gamma\setminus F} (y(\gamma)+x(\gamma))
\\ & = \frac \varepsilon4 + \sum_{\gamma\in\Gamma\setminus F} (y(\gamma)-x(\gamma)) +
2\sum_{\gamma\in\Gamma\setminus F} x(\gamma) < \frac \varepsilon4 +\frac \varepsilon4 +2\cdot \frac \varepsilon4 =\varepsilon.\end{aligned}$$
This shows that the identity is weak-to-norm continuous at $x$. The proof is completed.
\end{proof}

\begin{example}\label{ex:1sec4}
Let $X=(\ell_1,w)$. Let $(e_n)$ denote the canonical basic sequence.
\begin{itemize}
	\item[(i)] The sequence $(e_n)$ contains neither a weakly Cauchy subsequence nor a subsequence which is an $\ell_1$-sequence.
	\item[(ii)] $X$ contains no $\ell_1$-sequence but does not have the weak-AFPP.
	
	\end{itemize}
\end{example}

\begin{proof}
Let us show first that $X$ contains no  $\ell_1$-sequence.
Suppose that $(x_n)$ is an $\ell_1$-sequence in $X$. Denote by $Y$ its linear span. By the definition of an $\ell_1$-sequence we get that $Y$ is isomorphic to $(\ell_1^0,\|\cdot\|)$, hence it is metrizable. On the other hand, by the definition of $X$ we get that $Y$ is equipped with its weak topology which is not metrizable as $Y$ has infinite dimension.

Further, the sequence $(e_n)$ contains no weakly Cauchy subsequence in $(\ell_1,\|\cdot\|)$. As weakly Cauchy sequences in $(\ell_1,\|\cdot\|)$ and in $(\ell_1,w)$ coincide, we get that $(e_n)$ contains no weakly Cauchy subsequence in $X$. Thus the proof of (i) is completed.

To complete the proof of (ii) it remains to show that $X$ does not have the weak-AFPP.	Let $C$ be the closed convex hull of $\{e_n:n\in\mathbb{N}\}$.
As $C$ is contained in the positive cone of $\ell_1$, the norm and weak topologies coincide on $C$ (by Lemma~\ref{positivecone}). Thus $C$ has the weak-AFPP in $X$ if and only if it has the weak-AFPP in $(\ell_1,\|\cdot\|)$. But it does not have the weak-AFPP in $(\ell_1,\|\cdot\|)$ as it contains an $\ell_1$-sequence when considered in the norm topology.
\end{proof}

Nonetheless, for certain non-metrizable LCS we have the following analogue of Theorem~\ref{thm:FSWAFPP}.

\begin{thm}\label{thm:nonmetrizable}
Let $(X,\tau)$ be a Hausdorff LCS such that there is a metrizable locally convex topology on $X$ compatible with the duality. Let $C$ be a nonempty closed convex bounded subset of $X$. The following assertions are equivalent.
\begin{itemize}
	\item[(1)] Each nonempty closed convex subset of $C$ has the weak-AFPP.
	\item[(2)] Each sequence in $C$ has a weakly Cauchy subsequence.
\end{itemize}
\end{thm}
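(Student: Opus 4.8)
The plan begins with a structural observation that controls the interplay of topologies. Fix a metrizable locally convex topology $\mu$ on $X$ compatible with the duality. Since every metrizable locally convex space is bornological, hence a Mackey space, $\mu$ must coincide with the Mackey topology $\tau(X,X^*)$; thus, writing $w=\sigma(X,X^*)$, one has the chain $w\subseteq\tau\subseteq\mu$, and $\tau,\mu,w$ share the same dual $X^*$, the same closed convex sets, and (by Mackey's theorem on bounded sets) the same bounded sets. In particular $C$ is closed, convex and bounded in the metrizable LCS $(X,\mu)$, so the results of Section~\ref{sec:3} apply. Two elementary facts are used throughout: a \emph{convex} subset of $X$ is separable for one of $\tau,\mu,w$ iff for all of them (a countable $\tau$-dense subset $B$ of a convex $D$ gives $D\subseteq\overline{\co B}^{\,\tau}=\overline{\co B}^{\,\mu}$, and $\overline{\co B}^{\,\mu}$ is $\mu$-separable); and an $\ell_1$-sequence in $(X,\mu)$ has no weakly Cauchy subsequence (any subsequence is again an $\ell_1$-sequence, and on the copy of $\ell_1^0$ it spans the functionals from $X^*$ restrict, by Hahn--Banach, to all of $(\ell_1^0)^*=\ell_\infty$, so weak Cauchyness of the unit vector basis would force $((-1)^k)_k$ to converge). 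By Theorem~\ref{thm:1sec3} applied in $(X,\mu)$, condition (2) is therefore equivalent to: $C$ contains no $\ell_1$-sequence in $(X,\mu)$.

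For $(2)\Rightarrow(1)$ I would argue as in the proof of Proposition~\ref{prop:3sec3}. Let $K$ be a nonempty closed convex subset of $C$ and $f\colon K\to K$ a $\tau$-continuous map. Starting from $D_0=\{x_0\}$ and setting $D_{n+1}=\overline{\co}^{\,\tau}(D_n\cup f(D_n))$, the set $D=\overline{\bigcup_nD_n}^{\,\tau}$ is a $\tau$-closed, $\tau$-separable (hence $\mu$-separable) convex subset of $K$ with $f(D)\subseteq D$, and $D$ contains no $\ell_1$-sequence. Since $w\subseteq\tau$, the map $f|_D$ is $\tau$-to-$w$ sequentially continuous, so Theorem~\ref{prop:1sec2}(i) gives $0\in\overline{\{x-f(x):x\in D\}}^{\,w}$. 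On the other hand, Lemma~\ref{lem:FSWAFPP} applied to the bounded separable set $D$ in the metrizable LCS $(X,\mu)$ shows that $(\overline{D-D},w)$ is Fr\'echet--Urysohn; as $\{x-f(x):x\in D\}\subseteq D-D$, there is a sequence $(x_n)$ in $D$ with $x_n-f(x_n)\to0$ weakly, which establishes the weak-AFPP of $K$.

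For $(1)\Rightarrow(2)$ I would argue by contraposition. If some sequence in $C$ has no weakly Cauchy subsequence, then Theorem~\ref{thm:1sec3} (in $(X,\mu)$) yields a subsequence $(x_{n_k})$ that is an $\ell_1$-sequence in $(X,\mu)$. Put $D'=\overline{\co}\{x_{n_k}:k\in\N\}$ and $Y=\overline{\spn}\{x_{n_k}:k\in\N\}$ (the closures agree for $\tau,\mu,w$); $D'$ is a nonempty $\tau$-closed convex subset of $C$. As in the proof of $(1)\Rightarrow(2)$ of Theorem~\ref{thm:FSWAFPP}, the inverse of the map $T_0$ of \eqref{eq-T0} extends, using metrizability of $\mu$ and completeness of $\ell_1$, to a continuous linear $S\colon Y\to\ell_1$ which is an isomorphism of $(Y,\mu|_Y)$ onto $(S(Y),\|\cdot\|)$, with $S(x_{n_k})=e_k$; hence $S(D')\subseteq\overline{\co}\{e_k\}$ lies in the positive cone of $\ell_1$. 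Now Lemma~\ref{positivecone} says the norm and weak topologies coincide on that cone, and since $S$ is an isomorphism both for the original and for the weak topologies, this forces $\mu|_{D'}=w|_{D'}$; together with $w\subseteq\tau\subseteq\mu$ we obtain the crucial equality $\tau|_{D'}=w|_{D'}=\mu|_{D'}$. As $S(D')$ contains the $\ell_1$-basis, it is not totally bounded, so by \cite[Theorem 1.1]{Lin-Sternfeld} there is a Lipschitz $g\colon S(D')\to S(D')$ with $\delta:=\inf_y\|y-g(y)\|>0$. Then $f:=S^{-1}\circ g\circ S\colon D'\to D'$ is $\mu|_{D'}$-continuous, hence $\tau$-continuous on $D'$. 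If $D'$ had the weak-AFPP in $(X,\tau)$, there would be $(d_n)$ in $D'$ with $d_n-f(d_n)\to0$ weakly; applying $S$, which is weak-to-weak continuous, $S(d_n)-g(S(d_n))=S(d_n-f(d_n))\to0$ weakly in $\ell_1$, and Schur's theorem forces $\|S(d_n)-g(S(d_n))\|\to0$, contradicting $\delta>0$. Thus $D'\subseteq C$ fails the weak-AFPP, which contradicts (1).

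The main obstacle, and where genuine care is needed, is the continuity bookkeeping in this last implication: Lin--Sternfeld produces a bad self-map only on $S(D')\subseteq\ell_1$, a priori only norm-continuous, so one must be sure its pullback by $S$ is a bona fide $\tau$-continuous map on $D'$. That is exactly what the collapse $\tau|_{D'}=w|_{D'}=\mu|_{D'}$ delivers, and that collapse rests on two points that have to be isolated: the sandwich $w\subseteq\tau\subseteq\tau(X,X^*)=\mu$ (so metrizability of $\mu$ enters, somewhat unexpectedly, in the guise of ``$\mu$ is the Mackey topology''), and Lemma~\ref{positivecone}, which identifies the weak topology on the positive cone of $\ell_1$ in which $S(D')$ has been placed. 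Everything else is a transcription of the metrizable arguments of Section~\ref{sec:3}, using that passage from $\tau$ to $\mu$ alters neither the closed convex sets, nor the bounded sets, nor (for convex sets) separability.
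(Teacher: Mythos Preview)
Your proof is correct and follows essentially the same route as the paper's: you introduce the metrizable topology $\mu$ (the paper calls it $\rho$), observe it is Mackey so that $w\subseteq\tau\subseteq\mu$, and then for $(2)\Rightarrow(1)$ reduce to a separable invariant subset and invoke Theorem~\ref{prop:1sec2}(i) together with Lemma~\ref{lem:FSWAFPP}, while for $(1)\Rightarrow(2)$ you use Theorem~\ref{thm:1sec3} to extract an $\ell_1$-sequence, pass via $S$ to the positive cone of $\ell_1$, and apply Lemma~\ref{positivecone} to collapse $\mu$, $\tau$ and $w$ on $D'$. The only cosmetic difference is that you spell out the Lin--Sternfeld/Schur argument explicitly on $D'$, whereas the paper simply quotes the proof of $(1)\Rightarrow(2)$ in Theorem~\ref{thm:FSWAFPP} to say $(D',\mu)$ fails the weak-AFPP and then uses the topology collapse.
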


\begin{proof} Let $\rho$ be a metrizable locally convex topology on $X$ compatible with the duality. As any metrizable locally convex topology is Mackey (see \cite[page 263]{kothe}), we get $\sigma(X,X^*)\subset\tau\subset\rho$.

(2)$\Rightarrow$(1) Let $D\subset C$ be a nonempty closed convex subset of $C$ and $f:D\to D$ be a continuous mapping. It is easy to find a nonempty $\tau$-separable closed convex set $D'\subset D$ with $f(D')\subset D'$ (see the proof of Proposition~\ref{prop:sepdual}). By Theorem~\ref{prop:1sec2} we get
that $0$ belongs to the weak closure of $\{x-f(x):x\in D'\}$.

Further, as $D'$ is $\tau$-separable, it is also $\rho$-separable. By Theorem~\ref{thm:1sec3} $D'$ contains no $\ell_1$-sequence in $(X,\rho)$,
hence by Lemma~\ref{lem:FSWAFPP} the weak closure of $D'-D'$ is Fr\'echet-Urysohn in the weak topology, hence there is a sequence $(x_n)$ in $D'$ such that $x_n-f(x_n)$ weakly converge to $0$.

(1)$\Rightarrow$(2) Suppose that (2) does not hold, i.e. that there is a sequence in $C$ having no weakly Cauchy subsequence. By Theorem~\ref{thm:1sec3} there is a sequence $(x_n)$ in $C$ which is an $\ell_1$-sequence in $(X,\rho)$.
Let $D$ be the closed convex hull of $\{x_n:n\in\mathbb{N}\}$ and $Y$ be the closed linear span of $D$. We note that it does not matter whether we consider closed convex hulls and closed linear spans with respect to the topology $\tau$, $\rho$ or $\sigma(X,X^*)$ as all these topologies have the same dual.

By the proof of the implication (1)$\Rightarrow$(2) of Theorem~\ref{thm:FSWAFPP}
there is a linear mapping $T:Y\to\ell_1$ which is an isomorphism of $(Y,\rho)$ onto $T(Y)$ and, moreover, $(D,\rho)$ does not have the weak-AFPP. We claim that $(D,\tau)$ does not have the weak-AFPP either. This will be done if we show that the topologies $\rho$ and $\tau$ coincide on $D$.

To do that we recall that $T$ is a $\rho$-to-norm isomorphism and weak-to-weak homeomorphism of $Y$ onto $T(Y)$ and, moreover, $T(D)$ is contained in the positive cone of $\ell_1$. It follows from Lemma~\ref{positivecone} that on the positive cone of $\ell_1$ the norm and weak topologies coincide. It follows that $\rho$ and $\sigma(X,X^*)$ coincide on $D$. As $\sigma(X,X^*)\subset\tau\subset\rho$, the proof is complete.
\end{proof}

As a byproduct we obtain the following characterization of the Fr\'echet-Urysohn property in locally convex spaces.

\begin{pro}\label{prop:FU}
Let $(X,\tau)$ be a Hausdorff LCS such that there is a metrizable locally convex topology on $X$ compatible with the duality. The following assertions are equivalent.
\begin{itemize}
	\item[(i)] Any bounded subset of $X$ is Fr\'echet-Urysohn in the weak topology.
	\item[(ii)] Any bounded sequence in $X$ has a weakly Cauchy subsequence.
\end{itemize}
If, moreover, $\tau$ itself is metrizable, then these assertions are equivalent also to the following one:
\begin{itemize}
	\item[(iii)] $X$ contains no $\ell_1$-sequence.
\end{itemize}
\end{pro}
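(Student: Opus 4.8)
Here is how I would attack Proposition~\ref{prop:FU}.

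\begin{proof}[Proof sketch]
The plan is to fix once and for all a metrizable locally convex topology $\rho$ on $X$ compatible with the duality; since metrizable locally convex topologies are Mackey we then have $\sigma(X,X^*)\subseteq\tau\subseteq\rho$, so all three topologies share the dual $X^*$, the bounded sets, the weakly Cauchy sequences and the weak closures, and when $\tau$ is metrizable we simply take $\rho=\tau$. Throughout I will use the elementary remark that \emph{every $\ell_1$-sequence (in any topology compatible with the duality) is bounded and has no weakly Cauchy subsequence}: for such a sequence $(x_n)$ the map $T_0$ of Definition~\ref{def:1sec3} is a linear isomorphism of $\ell_1^0$ onto $T_0(\ell_1^0)$, hence a homeomorphism onto $T_0(\ell_1^0)$ with its subspace weak topology, while the unit vectors $(e_n)$ of $\ell_1^0$ have no weakly Cauchy subsequence (for any subsequence $(e_{n_j})_j$, the functional $\xi\in\ell_\infty=(\ell_1^0)^*$ with $\langle\xi,e_{n_j}\rangle=1$ for $j$ even and $=0$ for $j$ odd witnesses this).

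For (i)$\Rightarrow$(ii) I argue by contraposition. Suppose a bounded sequence $(x_n)$ in $X$ has no weakly Cauchy subsequence. By Theorem~\ref{thm:1sec3} applied to the metrizable space $(X,\rho)$, after passing to a subsequence we may assume $(x_n)$ is an $\ell_1$-sequence in $(X,\rho)$. Just as in the proof of the implication (1)$\Rightarrow$(2) of Theorem~\ref{thm:FSWAFPP}, there is a linear map $S$ which is an isomorphism of the $\rho$-closed linear span $Y$ of $\{x_n\}$ onto a subspace $S(Y)\subseteq\ell_1$, a weak-to-weak homeomorphism, and with $S(x_n)=e_n$. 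Put $A:=\{\tfrac12(x_n-x_m):n\ne m\}$; it is a bounded subset of $X$ and $S(A)=\{\tfrac12(e_n-e_m):n\ne m\}$. Given $f_1,\dots,f_k\in\ell_\infty$ and $\varepsilon>0$, the bounded sequence $\bigl((f_1(j),\dots,f_k(j))\bigr)_j$ in $\mathbb{R}^k$ has two indices $n\ne m$ with $|f_i(n)-f_i(m)|<\varepsilon$ for all $i$, whence $|\langle f_i,\tfrac12(e_n-e_m)\rangle|<\varepsilon$ for all $i$; hence $0\in\overline{S(A)}^{\,w}$ in $\ell_1$, and pulling back through the weak homeomorphism $S$ gives $0\in\overline A^{\,w}$ in $X$. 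On the other hand, a sequence in $A$ converging weakly to $0$ in $X$ would, after applying $S$, produce $\tfrac12(e_{n_k}-e_{m_k})$ with $n_k\ne m_k$ converging weakly to $0$ in $\ell_1$, hence in norm by the Schur property of $\ell_1$ --- contradicting $\|\tfrac12(e_{n_k}-e_{m_k})\|_1=1$. Thus $A\cup\{0\}$ is a bounded subset of $X$ that is not Fr\'echet--Urysohn in the weak topology, so (i) fails.

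For (ii)$\Rightarrow$(i) I first record, independently of (ii), that $(X,w)$ is countably tight, i.e.\ whenever $A\subseteq X$ and $x\in\overline A^{\,w}$ there is a countable $A_0\subseteq A$ with $x\in\overline{A_0}^{\,w}$. Indeed, if $(U_n)$ is a decreasing base of absolutely convex $\rho$-neighbourhoods of $0$, then $X^*=\bigcup_nU_n^\circ$ (each $x^*\in X^*$ is $\rho$-continuous, hence bounded on some $U_n$), and each polar $U_n^\circ$ is weak$^*$-compact by the Alaoglu--Bourbaki theorem; so $(X^*,w^*)$ is $\sigma$-compact and all its finite powers are Lindel\"of. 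The canonical map $x\mapsto(x^*\mapsto x^*(x))$ embeds $(X,w)$ into $C_p(X^*,w^*)$, which is countably tight by Arhangel'skii's theorem, and countable tightness is hereditary. Now let $B\subseteq X$ be bounded, $A\subseteq B$ and $x\in\overline A^{\,w}$; pick a countable $A_0\subseteq A$ with $x\in\overline{A_0}^{\,w}$ and put $C:=A_0\cup\{x\}$. Then $C$ is countable, hence separable, and bounded, and by (ii) it contains no $\ell_1$-sequence (such a sequence would be a bounded sequence with no weakly Cauchy subsequence). By Lemma~\ref{lem:FSWAFPP} applied to $(X,\rho)$, the weak closure of $C-C$ is Fr\'echet--Urysohn; since $C-x$ is a weakly homeomorphic copy of $C$ contained in $C-C$ and the Fr\'echet--Urysohn property is hereditary, $(C,w)$ is Fr\'echet--Urysohn. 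As $A_0\subseteq C$ and $x\in\overline{A_0}^{\,w}$, we obtain a sequence in $A_0\subseteq A$ converging weakly to $x$. This proves (i).

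Finally, when $\tau$ itself is metrizable we take $\rho=\tau$. The implication (ii)$\Rightarrow$(iii) is immediate from the elementary remark (an $\ell_1$-sequence would violate (ii)), and (iii)$\Rightarrow$(ii) follows at once from Theorem~\ref{thm:1sec3}: each bounded sequence has either a weakly Cauchy subsequence or an $\ell_1$-subsequence, and (iii) excludes the second possibility. The step that is not a routine transcription of the Banach-space arguments of \cite{Kalenda} is the passage to a countable (hence separable) subset in (ii)$\Rightarrow$(i): Lemma~\ref{lem:FSWAFPP} yields the Fr\'echet--Urysohn property only for separable sets, so the countable-tightness observation --- which, through the $\sigma$-compactness of $(X^*,w^*)$, is already encoded in the hypothesis on $X$ --- is what carries the conclusion over to arbitrary bounded sets; setting up that embedding and invoking Arhangel'skii's theorem is the main point of the argument.
\end{proof}
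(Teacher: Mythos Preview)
Your proof is correct and follows essentially the same route as the paper's. Both arguments handle (ii)$\Rightarrow$(i) by first establishing countable tightness of $(X,w)$ via the $\sigma$-compactness of $(X^*,w^*)$ and the Arhangel'skii--Pytkeev theorem, then reducing to a separable bounded set and invoking Lemma~\ref{lem:FSWAFPP}; and both handle (i)$\Rightarrow$(ii) by contraposition, extracting an $\ell_1$-sequence in $(X,\rho)$ via Theorem~\ref{thm:1sec3} and exhibiting a bounded set with $0$ in its weak closure but not reachable by any sequence, appealing to Schur's theorem. The only cosmetic difference is the witness set in (i)$\Rightarrow$(ii): the paper uses $T_0(S)$ with $S$ the full unit sphere of $\ell_1^0$ and the standard fact that $0$ lies in the weak closure of the sphere of any infinite-dimensional normed space, while you use the discrete subset $A=\{\tfrac12(x_n-x_m):n\ne m\}$ together with an explicit pigeonhole argument.
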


\begin{proof}
Let $\rho$ be a metrizable locally convex topology compatible with the duality.
By Theorem~\ref{thm:1sec3} $(X,\rho)$ contains no $\ell_1$-sequence if and only if $(X,\rho)$ satisfies the condition (ii). Further, the validity of (ii) for $(X,\rho)$ is equivalent to its validity for $(X,\tau)$. It follows that (ii) holds if and only if $(X,\rho)$ contains no $\ell_1$-sequence. In particular,
if $\rho=\tau$, we get (ii)$\Leftrightarrow$(iii).

(ii)$\Rightarrow$(i) Suppose that (ii) holds.
Let $A$ be a bounded subset of $(X,\tau)$ and let $x\in X$ belong to the weak closure of $A$. We need to find a sequence in $A$ weakly converging to $x$.

We first prove it under the additional assumption that $A$ is separable.
Then $A$ is bounded and separable in $(X,\rho)$ as well.
As $(X,\rho)$ contains no $\ell_1$-sequence, by Lemma~\ref{lem:FSWAFPP} we get that the weak closure of $A-A$ is Fr\'echet-Urysohn in the weak topology. Hence, in particular, there is a sequence in $A$ weakly converging to $x$.

To prove the general case it is enough to show that there is a countable set
$C\subset A$ such that $x$ belongs to the weak closure of $C$. In other words, it is enough to show that the weak topology on $X$ has countable tightness. To prove that observe that $(X,\sigma(X,X^*))$ is canonically homeomorphic to a subspace of $C_p(X^*,\sigma(X^*,X))$, which is the space of all continuous functions on the space $(X^*,\sigma(X^*,X))$ equipped with the topology of pointwise convergence.  Further notice that $(X^*,\sigma(X^*,X))$ is $\sigma$-compact -- this follows by the metrizability of $\rho$ as $X^*=\bigcup_{m,n\in\mathbb{N}} mB_n$ using the notation from the proof of Theorem~\ref{thm:1sec3}. Finally, as any finite power of a $\sigma$-compact space is again $\sigma$-compact and hence Lindel\"of, we can conclude by Arkhangel'skii-Pytkeev theorem \cite[Theorem II.1.1]{archan}.

(i)$\Rightarrow$(ii) Suppose that (ii) does not hold. Then there is a sequence $(x_n)$ in $X$ which is an $\ell_1$-sequence in $(X,\rho)$. Let $T_0:\ell_1^0\to X$ be defined as in \eqref{eq-T0}. Let $S$ denote the unit sphere in $\ell_1^0$.
Then $0$ is in the weak closure of $S$ (as $\ell_1^0$ is an infinite dimensional normed space) but it is not the weak limit of any sequence from $S$ (by Schur's theorem). Thus, $0$ is in the weak closure of $T_0(S)$ without being the weak limit of any sequence from $T_0(S)$. Thus $T_0(S)\cup\{0\}$ is a bounded set which is not Fr\'echet-Urysohn in the weak topology.
\end{proof}

As a consequence we get the following improvement of \cite[Theorem 2.4]{Kalenda}.

\begin{cor}\label{cor:1sec4}
Let $X$ be a Banach space. The following assertions are equivalent.
\begin{itemize}
	\item[(1)] $X$ contains no isomorphic copy of $\ell_1$.
	\item[(2)] The closed unit ball $B_X$ is Fr\'echet-Urysohn in the weak topology.
	\item[(3)] There is a sequence $(F_n)_{n=1}^\infty$ of weakly closed sets which are Fr\'echet-Urysohn in the weak topology such that $X=\bigcup_{n=1}^\infty F_n$.
\end{itemize}
\end{cor}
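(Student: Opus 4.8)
The plan is to read off the equivalences from Proposition~\ref{prop:FU}, supplemented by a Baire category argument for the one genuinely new implication, which is $(3)\Rightarrow(1)$.

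First I would settle $(1)\Leftrightarrow(2)$ by applying Proposition~\ref{prop:FU} to $X$ equipped with its norm topology $\tau$, taking $\rho=\tau$; this topology is metrizable, locally convex and compatible with the duality, so all three conditions of that proposition are available. Condition (iii) there says $X$ contains no $\ell_1$-sequence, which by Proposition~\ref{prop:1sec3} (the complete case) is exactly the statement that $X$ contains no isomorphic copy of $\ell_1$, i.e. (1). Condition (i), that every bounded subset of $X$ is weakly Fréchet--Urysohn, is equivalent to (2): it trivially implies (2) since $B_X$ is bounded, and conversely if $B_X$ is weakly Fréchet--Urysohn then so is each $nB_X$ (scaling is a weak homeomorphism), and since the Fréchet--Urysohn property is hereditary, every bounded set, being contained in some $nB_X$, inherits it. Thus $(1)\Leftrightarrow(\mathrm{iii})\Leftrightarrow(\mathrm{i})\Leftrightarrow(2)$. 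The implication $(2)\Rightarrow(3)$ is then immediate with $F_n=nB_X$: each $F_n$ is norm-closed and convex, hence weakly closed, and weakly Fréchet--Urysohn by the scaling argument just used, while $X=\bigcup_n nB_X$.

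The substance of the statement is $(3)\Rightarrow(1)$, which I would prove by contraposition. Assume (1) fails and pick a closed subspace $Y\subseteq X$ isomorphic to $\ell_1$; by Hahn--Banach the restriction map $X^*\to Y^*$ is onto, so the weak topology of $X$ restricts on $Y$ to $\sigma(Y,Y^*)$. Suppose, for contradiction, $X=\bigcup_n F_n$ with each $F_n$ weakly closed and weakly Fréchet--Urysohn. Then $Y=\bigcup_n(F_n\cap Y)$, and each $F_n\cap Y$ is $\sigma(Y,Y^*)$-closed, hence norm-closed in the Banach space $Y$; by the Baire category theorem some $F_{n_0}\cap Y$ has nonempty norm-interior in $Y$, so it contains a closed norm ball $y_0+rB_Y$. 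This ball is norm-closed and convex, hence weakly closed in $X$, and being a weakly closed subset of the weakly Fréchet--Urysohn set $F_{n_0}$ it is itself weakly Fréchet--Urysohn; transporting by the affine weak homeomorphism $y\mapsto r^{-1}(y-y_0)$ shows $B_Y$ is Fréchet--Urysohn in $\sigma(Y,Y^*)$. But $Y$ is a Banach space containing a copy of $\ell_1$, so by the equivalence $(1)\Leftrightarrow(2)$ already established, applied to $Y$, its unit ball is \emph{not} weakly Fréchet--Urysohn --- a contradiction. Hence (3) forces (1), completing the cycle.

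The main obstacle I anticipate is making the localisation in $(3)\Rightarrow(1)$ fully rigorous: one must simultaneously use that the subspace weak topology on $Y$ is $\sigma(Y,Y^*)$, that ``weakly closed'' downgrades to ``norm-closed'' so that Baire applies inside $Y$, that norm-closed convex sets are weakly closed (Mazur), that the Fréchet--Urysohn property is hereditary, and that translation and dilation are weak homeomorphisms. Each ingredient is elementary, but the argument only goes through because all of them hold at once, so care is needed in the bookkeeping.
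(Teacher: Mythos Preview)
Your proof is correct and matches the paper's route for $(1)\Leftrightarrow(2)$ (via Proposition~\ref{prop:FU}) and for $(2)\Rightarrow(3)$. For $(3)\Rightarrow(1)$ the paper simply cites \cite[Theorem~2.4]{Kalenda}, noting parenthetically that a direct Baire argument also gives $(3)\Rightarrow(2)$; so in spirit your argument is the one the paper has in mind, and you have supplied the details rather than quoting the reference.

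That said, your Baire argument takes an unnecessary detour through the subspace $Y$. One can prove $(3)\Rightarrow(2)$ directly in $X$: the $F_n$ are weakly closed, hence norm-closed, so by Baire some $F_{n_0}$ contains a ball $x_0+rB_X$; this ball inherits the Fr\'echet--Urysohn property from $F_{n_0}$, and the affine weak homeomorphism $x\mapsto r^{-1}(x-x_0)$ transports it to $B_X$. This avoids passing to $Y$, avoids the Hahn--Banach check that the subspace weak topology is $\sigma(Y,Y^*)$, and avoids the recursive appeal to $(1)\Leftrightarrow(2)$ for $Y$. A minor aside: the Fr\'echet--Urysohn property is hereditary to \emph{all} subspaces, so you do not need the ball to be weakly closed for that step; the weak closedness is only used (implicitly) to know the ball lies inside $F_{n_0}$, which you already have from the norm-interior.
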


\begin{proof} The equivalence (1)$\Leftrightarrow$(2) follows from Proposition~\ref{prop:FU}. The implication (2)$\Rightarrow$(3) is trivial.
The implication (3)$\Rightarrow$(1) follows from \cite[Theorem 2.4]{Kalenda}
(or, alternatively, (3)$\Rightarrow$(2) follows from the Baire category theorem as in \cite[Theorem 2.4]{Kalenda}).
\end{proof}

Let us remark that the implication (ii)$\Rightarrow$(i) of Proposition~\ref{prop:FU} does not hold for general LCS. Indeed, there are Banach spaces $X$ such that the dual unit ball $B_{X^*}$ is weak* sequentially compact, but it is not Fr\'echet-Urysohn in the weak* topology. In particular,
the dual unit ball is weak* sequentially compact whenever $X$ is Asplund (this follows for example from \cite[Theorem 2.1.2]{fabian}), in particular if $X=C(K)$ with $K$ scattered (see, e.g., \cite[Theorem 1.1.4]{fabian}). On the other hand, $K$ is canonically homeomorphic to a subset of $(B_{C(K)^*},w^*)$, so it is enough to observe that there are scattered compact spaces which are not Fr\'echet-Urysohn. As a concrete example we can take $K=[0,\omega_1]$, the ordinal interval equipped with the order topology ($\omega_1$ is the first uncountable ordinal).

However the following problems seem to be open.

\begin{problems} Let $X$ be a Hausdorff LCS.
\begin{itemize}
	\item Is it true that each bounded sequence in $X$ has a weakly Cauchy subsequence if and only if each bounded {\it separable} subset is Fr\'echet-Urysohn in the weak topology?
	\item Is it true that $X$ has the weak-AFPP if and only if  each bounded sequence in $X$ has a weakly Cauchy subsequence?
\end{itemize}
\end{problems}

As we have seen, both questions have positive answer if $X$ admits a metrizable locally convex topology compatible with the duality. We do not know what happens without this assumption. We conjecture that at least the first question has negative answer.
A candidate for a counterexample could be the space $(X^*,w^*)$ where $X$ is one of the Johnsonn-Lindenstrauss spaces constructed in \cite{JL}.
 	
\bigskip
\bigskip

\end{document}